\newtheorem{thm}{Theorem}[section]
\newtheorem{cor}[thm]{Corollary}
\newtheorem{lem}[thm]{Lemma}
\newtheorem{prop}[thm]{Proposition}
\theoremstyle{definition}
\newtheorem{defn}[thm]{Definition}
\theoremstyle{remark}
\newtheorem{rem}[thm]{Remark}
\numberwithin{equation}{section}
\DeclareMathOperator{\diam}{diam}
\DeclareMathOperator{\Iso}{Iso}
\DeclareMathOperator{\id}{id}
\DeclareMathOperator{\Sym}{Sym}
\DeclareMathOperator{\Fix}{Fix}
\newcommand{\Sp}[1]{\operatorname{Sp}(#1)}
\begin{document}

\title[How rigid the finite ultrametric spaces can be?]{How rigid the finite ultrametric spaces can be?}

\author{O. Dovgoshey}
\address{
Division of Applied Problems in Contemporary Analysis \\
Institute of Mathematics of NASU \\ Tereshenkivska str. 3\\
Kyiv 01601\\
Ukraine
}

\email{aleksdov@mail.ru}

\author{E. Petrov}

\address{Division of Applied Problems in Contemporary Analysis\\
Institute of Mathematics of NASU\\ Tereshenkivska str. 3\\
Kyiv 01601\\
Ukraine}

\email{eugeniy.petrov@gmail.com}

\author{H.-M. Teichert}

\address{Institute of Mathematics\\
University of L\"ubeck\\
Ratzeburger Allee 160\\
23562 L\"ubeck\\
Germany}

\email{teichert@math.uni-luebeck.de}

\thanks{The research of the first and second authors was supported as a part of EUMLS project with grant agreement PIRSES -- GA -- 2011 -- 295164.}

\subjclass{Primary 54E35}

\keywords{finite ultrametric space, rigidness of metric spaces, extremal problems in finite ultrametric spaces, fixed point, representing tree of ultrametric space.}

\begin{abstract}
A metric space $X$ is rigid if the isometry group of $X$ is trivial. The finite ultrametric spaces $X$ with $|X| \geq 2$ are not rigid since for every such $X$ there is a self-isometry having exactly $|X|-2$ fixed points. Using the representing trees we characterize the finite ultrametric spaces $X$ for which every self-isometry has at least $|X|-2$ fixed points. Some other extremal properties of such spaces and related  graph theoretical characterizations are also obtained.
\end{abstract}

\maketitle
\section{Introduction}
Recall some definitions from the theory of metric spaces. A \textit{metric} on a set $X$ is a function $d\colon X\times X\rightarrow \mathbb{R}^+$, $\mathbb R^+ = [0,\infty)$, such that for all $x$, $y$, $z \in X$:
\begin{itemize}
\item [(i)] $d(x,y)=d(y,x)$,
\item [(ii)] $(d(x,y)=0)\Leftrightarrow (x=y)$,
\item [(iii)] $d(x,y)\leq d(x,z) + d(z,y)$.
\end{itemize}
A metric space $(X, d)$ is \emph{ultrametric} if the \emph{strong triangle inequality}
$$
d(x,y)\leq \max \{d(x,z), d(z,y)\}
$$
holds for all $x$, $y$, $z \in X$. In this case $d$ is called an ultrametric on $X$ and $(X, d)$ is an ultrametric space. The \emph{spectrum} of a metric space $(X,d)$ is the set
$$
\operatorname{Sp}(X)=\{d(x,y)\colon x,y \in  X\}.
$$

In 2001 at the Workshop on General Algebra~\cite{WGA} the attention of experts on the theory of lattices was guided to the following problem of I.~M.~Gelfand: using graph theory describe up to isometry all finite ultrametric spaces. An appropriate representation of ultrametric spaces by rooted trees was proposed in~\cite{GurVyal(2012)}, \cite{GV}, \cite{H04}, \cite{Lem}.

An application of the representation from~\cite{GurVyal(2012)}, \cite{GV} is a structural characteristic of all finite ultrametric spaces $X$ for which the Gomory-Hu inequality
$$
|\operatorname{Sp}(X)| \leq |X|
$$
becomes an equality (see~\cite{DPT(P-adic)} and \cite{PD(UMB)}). The purpose of this papers is to describe the structure of finite ultrametric spaces which have maximum rigidity.

Recall that a \textit{graph} is a pair $(V,E)$ consisting of a nonempty set $V$ and a (probably empty) set $E$  elements of which are unordered pairs of different points from $V$. For a graph $G=(V,E)$, the sets $V=V(G)$ and $E=E(G)$ are called \textit{the set of vertices} and \textit{the set of edges}, respectively. A graph is \emph{complete} if $\{x,y\} \in E(G)$ for all distinct $x, y \in V(G)$. A \emph{path} in a graph $G$ is a subgraph $P$ of $G$ for which
$$
V(P)=\{x_0,x_1,...,x_k\}, \quad E(P) =\{\{x_0,x_1\},...,\{x_{k-1},x_k\}\},
$$
where all $x_i$ are distinct. A graph $G$ is \emph{connected} if any two distinct vertices of $G$ can be joined by a path. A finite graph $C$ is a cycle if $|V(C)| \geq 3$ and there exists an enumeration $(v_1, \ldots, v_n)$ of its vertices such that
$$
(\{v_i, v_j\} \in E(C)) \Leftrightarrow (|i-j|=1 \text{ or } |i-j|=n-1).
$$
A connected graph without cycles is called a \emph{tree}. A tree $T$ may have a distinguished vertex called the \emph{root}; in this case $T$ is called a \emph{rooted tree}. For a rooted tree $T$, we denote by $L_T$ the set of leaves of $T$. We denote by $(G, v, l)$ a graph $G$ with a \emph{distinguished vertex} $v \in V(G)$ and a \emph{labeling function} $l\colon V(G) \to L$. In what follows we usually suppose that the set $L$ coincides with $\mathbb R^{+}$.

Let $k\geq 2$. A graph $G$ is called \emph{complete $k$-partite} if its vertices can be divided into $k$ disjoint nonempty sets $X_1,...,X_k$ so that there are no edges joining the vertices of the same set $X_i$ and any two vertices from different $X_i,X_j$, $1\leq i,j \leq k$ are adjacent. In this case we write $G=G[X_1,...,X_k]$. We shall say that $G$ is a {\it complete multipartite graph} if there exists $k \geq 2$ such that $G$ is complete $k$-partite, cf. \cite{Di}.

\section{The representing trees of finite ultrametric spaces}

For every metric space $(X, d)$ we write
$$
\diam{X} = \sup\{d(x,y): x, y \in X\}.
$$

\begin{defn}[\cite{DDP(P-adic)}]\label{d2}
Let $(X,d)$ be a finite ultrametric space, $|X|\geqslant 2$. Define the \emph{diametrical} graph $G_X$ as follows $V(G_X)=X$ and, for all $u$, $v \in X$,
$$
(\{u,v\}\in E(G_X))\Leftrightarrow(d(u,v)=\diam X).
$$
\end{defn}

\begin{lem}[\cite{DDP(P-adic)}]\label{t13}
Let $(X,d)$ be a finite ultrametric space, $|X|\geq 2$. Then $G_X=G_X[X_1,...,X_k]$ for some $k\geq 2$.
\end{lem}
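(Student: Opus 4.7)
The plan is to show that the non-adjacency relation of $G_X$ (together with equality) is an equivalence relation; its classes will then serve as the parts $X_1,\dots,X_k$. Concretely, I would define a binary relation $\sim$ on $X$ by
$$
u \sim v \quad \Longleftrightarrow \quad u = v \ \text{ or } \ d(u,v) < \diam X.
$$
Reflexivity is built in, and symmetry is inherited from $d(u,v)=d(v,u)$. The one substantive step is transitivity: given $u \sim v$ and $v \sim w$, I may assume $u,v,w$ are pairwise distinct (the degenerate cases are trivial), and then the strong triangle inequality yields
$$
d(u,w) \leq \max\{d(u,v),\, d(v,w)\} < \diam X,
$$
so $u \sim w$. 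This is the crucial place where ultrametricity enters; an ordinary triangle inequality would only give $d(u,w) < 2\diam X$, which is useless.

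Let $X_1,\dots,X_k$ denote the equivalence classes of $\sim$. For $u,v$ in the same class $X_i$ with $u \neq v$ we have $d(u,v) < \diam X$ by definition, so $\{u,v\}\notin E(G_X)$. For $u\in X_i$ and $v\in X_j$ with $i\neq j$, the relation $u\not\sim v$ forces $u\neq v$ and $d(u,v)\geq \diam X$; combined with $d(u,v)\leq \diam X$ this gives $d(u,v)=\diam X$, i.e.\ $\{u,v\}\in E(G_X)$. Hence $G_X = G_X[X_1,\dots,X_k]$.

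It remains to verify $k\geq 2$. Since $X$ is finite with $|X|\geq 2$, the quantity $\diam X$ is attained by some pair $u,v\in X$, and $d(u,v)=\diam X>0$ entails $u\not\sim v$, so $u$ and $v$ belong to distinct classes. I do not foresee a real obstacle here; the only delicate point is phrasing transitivity so that the trivial cases (where two of $u,v,w$ coincide) are handled cleanly before invoking the strong triangle inequality.
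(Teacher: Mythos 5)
Your proof is correct and is essentially the argument behind this lemma (the paper only cites \cite{DDP(P-adic)} for it): one checks that non-adjacency together with equality is an equivalence relation, with transitivity coming from the strong triangle inequality, and takes the classes as the parts. The handling of $k\geq 2$ via attainment of $\diam X>0$ on the finite set is also exactly right.
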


With a finite nonempty ultrametric space $(X, d)$, we can associate a labeled rooted tree $T_X$ by the following rule. The root of $T_X$ is, by definition, the set $X$.  If $X=\{x\}$ is a one-point set, then $T_X$ is a tree consisting of one node $\{x\}$ which has the label $0$. Let $|X|\geq 2$. According to Lemma~\ref{t13} we have $G_X = G_X[X_1,...,X_k]$. In this case, we set that, the root of the tree $T_X$ is labeled by $\diam X$ and $T_X$ has $k$ nodes $X_1,...,X_k$ of the first level with the labels
\begin{equation}\label{e2.1}
l(X_i):=\diam X_i
\end{equation}
$i = 1,...,k$. The nodes of the first level indicated by zero are leaves, and those indicated by positive numbers are internal nodes of $T_X$. If the first level has no internal nodes, then the tree $T_X$ is constructed. Otherwise, by repeating the above-described procedure with $X_i$ corresponding to internal nodes of the first level instead of $X$, we obtain the nodes of the second level, etc. Since $X$ is finite, all vertices on some level will be leaves, and the construction of $T_X$ is completed.

The above-constructed labeled rooted tree $T_X$ is called the \emph{representing tree} of the ultrametric space $(X, d)$.

Let $(T, v^*)$ be a rooted tree. For every node $u^*$ of $(T, v^*)$ define a rooted subtree $T_{u^*}$ of $(T, v^*)$ as follows: $u^*$ is the root of $T_{u^*}$ and a vertex $w \in T$ belongs to $V(T_{u^*})$ if and only if $u^*$ lies on the path joining $v^*$ and $w$ in $T$, moreover
$$
(\{u, v\} \in E(T_{u^*})) \Leftrightarrow (\{u, v\} \in E(T)).
$$
for all $u$, $v \in V(T_{u^*})$.

The following lemma was formulated in~\cite{PD(UMB)} for finite ultrametric spaces $X$ satisfying the equality $|X| = |\Sp{X}|$ but its proof is also true for arbitrary finite ultrametric spaces.

\begin{lem}\label{l2.3}
Let $(X, d)$ be a finite ultrametric space, $|X|\geq 2$, and let $a$ and $b$ be two different leaves of the tree $T_X$. If $(x_1, x_2, . . . , x_n)$, $x_1=a$, $x_n=b$, is the path joining $a$ and $b$ in $T_X$, then
\begin{equation}\label{e2.2}
d(a, b) = \max\limits_{1\leq i \leq n} l({x}_i).
\end{equation}
\end{lem}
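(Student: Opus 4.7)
My plan is to locate the turning point of the path---the lowest common ancestor (LCA) of the leaves $\{a\}$ and $\{b\}$ in $T_X$---and argue that $d(a,b)$ equals the label of this node, which is also the maximum label along the path. Let $j \in \{1,\ldots,n\}$ be such that $Z := x_j$ is the LCA of $\{a\}$ and $\{b\}$; by the recursive construction of $T_X$, $Z\subseteq X$ and its children in $T_X$ are precisely the parts $Z_1,\ldots,Z_k$ of the diametrical graph $G_Z = G_Z[Z_1,\ldots,Z_k]$ furnished by Lemma~\ref{t13}, while every other $x_i$ on the path is a descendant of $Z$ contained in some single part $Z_s$.

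To show $d(a,b) = l(Z) = \diam Z$, note that the path has length at least two (two leaves cannot be adjacent in a rooted tree), so the neighbours $x_{j-1}$ and $x_{j+1}$ of $Z$ exist and are distinct children of $Z$, hence lie in distinct parts $Z_p$ and $Z_q$ with $p\neq q$. Tracing the path down to the singleton leaves $\{a\}$ and $\{b\}$ gives $a \in Z_p$ and $b \in Z_q$. Since $G_Z$ is a complete multipartite graph with parts $Z_1,\ldots,Z_k$, the pair $\{a,b\}$ is an edge of $G_Z$, which by Definition~\ref{d2} means exactly $d(a,b) = \diam Z = l(Z)$.

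It remains to verify that $l(Z) = \max_{1\leq i\leq n} l(x_i)$. Each $x_i$ with $i\neq j$ lies inside some part $Z_s$, so $x_i \subseteq Z_s$ and therefore $\diam x_i \leq \diam Z_s$. The crucial strict bound $\diam Z_s < \diam Z$ holds because every pair in $Z$ attaining the diameter corresponds to an edge of $G_Z$ and $G_Z$ has no edges inside a single part. Consequently $l(x_i) < l(Z)$ for $i\neq j$, so the maximum in \eqref{e2.2} is uniquely attained at $x_j = Z$, yielding $\max_i l(x_i) = l(Z) = d(a,b)$. The point requiring the most care is this strict monotonicity of $\diam$ down the tree, together with the bookkeeping that identifies the LCA with the index $j$ and records how descendants sit inside parts of $G_Z$; once these structural facts are in place, the rest is a direct appeal to Lemma~\ref{t13} and Definition~\ref{d2}.
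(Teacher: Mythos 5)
Your proof is correct. The paper itself gives no argument for this lemma (it only cites \cite{PD(UMB)}, remarking that the proof there carries over to arbitrary finite ultrametric spaces), so there is nothing internal to compare against; your LCA-based argument is the natural one and is complete. The two points you flag as delicate are indeed the essential ones, and both are handled correctly: $a$ and $b$ land in distinct parts of $G_Z$ because the two path-neighbours of the LCA $Z=x_j$ are distinct children of $Z$ (and $1<j<n$ since $Z$ is internal while $a,b$ are leaves), giving $d(a,b)=\diam Z$ via Definition~\ref{d2} and Lemma~\ref{t13}; and the strict inequality $\diam Z_s<\diam Z$ holds because a pair in $Z_s$ realizing $\diam Z$ would be an edge of $G_Z$ inside a single part, which is impossible. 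This yields $l(x_i)<l(Z)$ for all $i\neq j$ and hence \eqref{e2.2}.
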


Let $(X, d)$ be a metric space. Recall that a subset $B$ of $X$ is a \emph{ball} in $(X,d)$ if there is $r \geq 0$ and $t \in X$ such that
$$
B=\{x\in X\colon d(x,t)\leq r\}.
$$
In this case we write $B=B_r(t)$. By $\mathbf{B}_X$ we denote the set of all balls in $(X, d)$.

The proof of the next lemma can be found in \cite{P(TIAMM)} but we reproduce it here for the convenience of the reader.

\begin{lem}\label{l2.4}
 Let $(X,d)$ be a finite ultrametric space  with representing tree  $T_X$, $|X|\geq 2$. Then
\begin{itemize}
\item [(i)] $L_{T_v}\in \mathbf{B}_X$ holds for every node $v\in V(T_X)$.
\item [(ii)] For every $B \in \mathbf{B}_X$ there exists a node $v$ such that $L_{T_v}=B$.
\end{itemize}
\end{lem}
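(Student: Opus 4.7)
The cornerstone observation is that labels of $T_X$ strictly decrease along parent-child edges: if $u^*$ is the parent of $v^*$, then $v^*$ is one of the parts $X_i$ in the decomposition $G_{u^*}=G_{u^*}[X_1,\dots,X_k]$, so by the definition of the diametrical graph no pair inside $v^*$ realises $\diam u^*$. Hence $\diam v^*<\diam u^*$, i.e.\ $l(v^*)<l(u^*)$. Iterating, any proper ancestor $u^*$ of $v^*$ satisfies $l(u^*)>l(v^*)$. I combine this with Lemma~\ref{l2.3}: for two leaves $\{a\}$ and $\{b\}$, the maximum label on the joining path is attained at their least common ancestor $w^*$ (since labels grow toward the root along each leg), giving $d(a,b)=l(w^*)$.

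For part~(i), identify each leaf of $T_X$ with its unique element, so that $L_{T_{v^*}}$ becomes a subset of $X$. If $v^*$ is a leaf, then $L_{T_{v^*}}=\{x\}=B_0(x)$. Otherwise fix any $t\in L_{T_{v^*}}$ and put $r:=l(v^*)=\diam v^*$. The inclusion $L_{T_{v^*}}\subseteq B_r(t)$ is immediate. For the reverse inclusion, if $x\in X\setminus L_{T_{v^*}}$, then the least common ancestor $w^*$ of $\{x\}$ and $\{t\}$ is a proper ancestor of $v^*$, so by the monotonicity above $d(x,t)=l(w^*)>r$ and $x\notin B_r(t)$.

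For part~(ii), given $B=B_r(t)\in\mathbf{B}_X$, consider the path $\{t\}=u_0^*,u_1^*,\dots,u_m^*=X$ from the leaf $\{t\}$ to the root; its labels $l(u_i^*)$ are strictly increasing. Let $j$ be the largest index with $l(u_j^*)\le r$ (which exists since $l(u_0^*)=0$) and set $v^*:=u_j^*$. For $x\in L_{T_{v^*}}$ the least common ancestor of $\{x\}$ and $\{t\}$ lies in $T_{v^*}$, so $d(x,t)\le l(v^*)\le r$. Conversely, if $x\in B$ and $u_i^*$ is the least common ancestor of $\{x\}$ and $\{t\}$, then $l(u_i^*)=d(x,t)\le r$; strict monotonicity forces $i\le j$, so $x\in L_{T_{u_i^*}}\subseteq L_{T_{v^*}}$. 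The whole argument is bookkeeping around Lemma~\ref{l2.3}; the only substantive input is the strict label-decrease along edges, which follows directly from the definition of the diametrical graph, so I do not foresee any real obstacle.
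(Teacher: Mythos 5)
Your proof is correct and follows essentially the same route as the paper's: both parts rest on Lemma~\ref{l2.3} together with the strict decrease of labels along parent--child edges, which the paper uses implicitly and you spell out via least common ancestors. The only cosmetic difference is in (ii), where the paper locates the node as the maximum-label vertex on a path joining a diametrical pair of $B$, while you take the highest ancestor of $\{t\}$ whose label does not exceed $r$; both selections produce the same node.
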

\begin{proof}
\textbf{(i)} Let $v\in V(T_X)$ and $t\in L_{T_v}$. Consider the ball
$$
B_{l(v)}(t)=\{x \in X\colon d(x,t)\leq l(v)\}.
$$
Let $t_1 \in L_{T_v}$ such that $t_1 \neq t$. Since $T_v$ contains a path joining $t$ and $t_1$, according to Lemma~\ref{l2.3} we have $d(t,t_1)\leq l(v)$. The inclusion $L_{T_v}\subseteq {B}_{l(v)}(t)$ is proved. Conversely, suppose there exists $t_0\in B_{l(v)}(t)$ such that $t_0 \notin L_{T_v}$. Let us consider the path $(t_0,v_1,...,v_n,t)$. From $t_0\notin L_{T_v}$ it follows that $\max\limits_{1\leq i \leq n}l(v_i)>l(v)$, i.e. $d(t_0,t)>l(v)$, we have a contradiction.

\textbf{(ii)} Let $t\in X$ and $B=B_r(t)$, where $r=\diam B$. Let $x$, $y \in B$ with $d(x,y) = r$. Let us consider the path $(v_1,...,v_n)$ with $v_1=x$ and $v_n=y$ in the tree $T_X$. According to Lemma~\ref{l2.3} we have $d(x,y) = \max\limits_{1\leq i \leq n}l(v_i)$. Let $i$ be an index such that $\max$ here is attained. The proof of the equality $L_{T_{v_i}}=B$ is analogous to the proof of (i).
\end{proof}

Let $(G^{i}, v^{i}, l^{i})$, $i=1, 2$, be a labeled graphs with the distinguished vertices $v^1$, $v^2$ and a common set $L$ of labels. A bijective function $f: V(G^1) \to V(G^2)$ is an isomorphism of $G^{1}$ and $G^{2}$ if
$$
(\{x,y\} \in E(G^1)) \Leftrightarrow (\{f(x),f(y)\} \in E(G^2))
$$
for all $x$, $y \in V(G^1)$. If, in addition, we have $f(v^1)=v^2$, then $f$ is an isomorphism of $(G^1, v^1)$ and $(G^2, v^2)$. The isomorphism $f$ of $(G^1, v^1)$ and $(G^2, v^2)$ is called an isomorphism of the $(G^1, v^1, l^1)$ and $(G^2, v^2, l^2)$ if $l^1(v) = l^2(f(v))$ for every $v \in V(G^1)$.

\begin{defn}\label{d2.5}
Let $(X, d)$ and $(Y, \rho)$ be metric spaces. A bijective function $f: X \to Y$ is an \emph{isometry} if
$$
d(x, y) = \rho(f(x), f(y))
$$
holds for all $x$, $y \in X$.
\end{defn}

Two metric spaces $(X, d)$ and $(Y, \rho)$ are \emph{isometric} if there is an isometry $f: X \to Y$.

\begin{thm}\label{t2.6}
Let $(X, d)$ and $(Y, \rho)$ be finite nonempty ultrametric spaces. Then $(X, d)$ and $(Y, \rho)$ are isometric if and only if the labeled rooted trees $T_X$ and $T_Y$ are isomorphic.
\end{thm}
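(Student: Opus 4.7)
The plan is to prove both implications by exploiting the description of $T_X$ through Lemmas~\ref{l2.3} and~\ref{l2.4}. The easy direction is ``isomorphic trees imply isometric spaces'': by construction, the leaves of $T_X$ are precisely the singletons $\{x\}$, $x \in X$ (all carrying label $0$), so a labeled rooted tree isomorphism $\varphi\colon T_X \to T_Y$ restricts to a bijection of leaves, which defines a bijection $f\colon X \to Y$ via $\{f(x)\} = \varphi(\{x\})$. For distinct $a, b \in X$, the unique path from $\{a\}$ to $\{b\}$ in $T_X$ is carried by $\varphi$ to the unique path from $\{f(a)\}$ to $\{f(b)\}$ in $T_Y$, with labels preserved node by node; applying Lemma~\ref{l2.3} on both sides yields $d(a,b) = \rho(f(a),f(b))$, so $f$ is an isometry.

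For the nontrivial direction, I would take an isometry $f\colon X \to Y$ and argue by induction on $|X|$. The base case $|X|=1$ is immediate since both trees consist of a single node with label $0$. For $|X|\geq 2$, since $f$ preserves all distances we have $\diam X = \diam Y$ and $\{u,v\}\in E(G_X)$ if and only if $\{f(u),f(v)\}\in E(G_Y)$. Writing $G_X = G_X[X_1,\ldots,X_k]$ as in Lemma~\ref{t13}, the image sets $f(X_1),\ldots,f(X_k)$ are the parts of $G_Y$ (using uniqueness of the multipartite decomposition up to reordering). Hence $T_X$ and $T_Y$ have equal root labels and the same number of first-level nodes, with matching labels $\diam X_i = \diam f(X_i)$.

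By the recursive construction of the representing tree, the rooted subtree of $T_X$ hanging at the first-level node $X_i$ coincides with the representing tree of the subspace $(X_i, d|_{X_i \times X_i})$, and analogously for $f(X_i)$ inside $T_Y$. Since $|X_i| < |X|$ and the restriction $f|_{X_i}\colon X_i \to f(X_i)$ is an isometry, the inductive hypothesis supplies labeled rooted tree isomorphisms $\varphi_i\colon T_{X_i} \to T_{f(X_i)}$. Gluing the $\varphi_i$ at the respective roots (and sending the root of $T_X$ to the root of $T_Y$) produces an isomorphism $T_X \to T_Y$.

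The main obstacle I anticipate is the bookkeeping in the inductive step: one has to verify that the first-level partition of $Y$ appearing in $T_Y$ really is $\{f(X_1),\ldots,f(X_k)\}$, not merely some compatible partition of $Y$. This reduces to the isometric invariance of the diametrical graph together with the fact that a complete multipartite graph determines its parts uniquely; both are elementary, but they are the point at which the argument could be botched. Once these are in place, the recursive structure of $T_X$ makes the gluing of the $\varphi_i$ automatic, and no new use of the ball-node correspondence from Lemma~\ref{l2.4} is required beyond what is already implicit in the construction of the representing tree.
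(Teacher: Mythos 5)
Your proof is correct and takes essentially the same approach as the paper: the paper's own argument is only a two-sentence sketch saying that the forward direction is an inductive construction from the definition of the representing tree and that the converse follows from Lemma~\ref{l2.3}, which is precisely what you carry out in detail (including the one point the paper leaves implicit, namely that the parts of the complete multipartite diametrical graph are determined uniquely because non-adjacency is an equivalence relation by the strong triangle inequality).
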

\begin{proof}
An isomorphism of $T_X$ and $T_Y$ for isometric $X$ and $Y$ can be inductively constructed if we use of the definition of the representing trees given above. The converse statement follows from Lemma~\ref{l2.3}.
\end{proof}

For the relationships between ultrametric spaces and the leaves or the ends of certain trees see also~\cite{DLW, Fie, GurVyal(2012), GV, Hol, H04, WGA, Lem}.

Let $(X, d)$ be a finite metric space and let $B_1$, $B_2$ and $B$ be some balls in~$(X, d)$. We shall say that $B$  \emph{lies between} $B_1$ and $B_2$ if
$$
B_1 \subseteq B \subseteq B_2 \text{ or } B_2 \subseteq B \subseteq B_1.
$$
If $B$ lies between $B_1$ and $B_2$, we write $B \in [B_1, B_2]$.

\begin{defn}\label{d2.7}
Let us define a graph $\Gamma_X$ by the rule: $V(\Gamma_X) = \mathbf{B}_X$ and, for all $B_1$, $B_2 \in \mathbf{B}_X$, $\{B_1, B_2\} \in E(\Gamma_X)$ if and only if
\begin{enumerate}
\item $B_1 \subseteq B_2$ or $B_2 \subseteq B_1$, and
\item $(B \in [B_1, B_2]) \Rightarrow (B=B_1 \text{ or } B = B_2)$ for every $B \in \mathbf{B}_X$.
\end{enumerate}
\end{defn}

For any labeled rooted tree $T_X = (T_X, X, l)$ we write $\overline{T}_X = (T_X, X)$. In the following theorem we presuppose that $X \in \mathbf{B}_X$ is a distinguished point in $\Gamma_X$.
\begin{thm}\label{t2.7}
Let $(X, d)$ be a finite nonempty metric space. Then the graph $\Gamma_X$ is a tree if and only if $(X, d)$ is ultrametric. If $(X, d)$ is ultrametric, then $(\Gamma_X, X)$ is isomorphic to the rooted tree $\overline{T}_X$ with the isomorphism $V(\overline{T}_X)\ni u\mapsto L_{T_u} \in V(\Gamma_X)$.
\end{thm}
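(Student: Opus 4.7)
The plan is to handle the two directions of the equivalence separately, with the explicit isomorphism emerging from the forward direction.

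First I would prove the implication ``$(X,d)$ ultrametric $\Rightarrow$ $(\Gamma_X,X)\cong \overline{T}_X$''. Observe that every node of $T_X$ is a subset of $X$, and by construction $L_{T_u}$ equals this subset under the natural identification of leaves $\{x\}$ with points $x\in X$. Lemma~\ref{l2.4} then says that the map $\phi\colon V(\overline{T}_X)\to V(\Gamma_X)$, $\phi(u)=L_{T_u}$, is a bijection sending the root $X$ of $\overline{T}_X$ to the distinguished vertex $X\in\mathbf{B}_X$. I would then verify that $\phi$ preserves adjacencies: if $v$ is a child of $u$ in $T_X$, then $L_{T_v}\subsetneq L_{T_u}$, and any ball strictly between them is, by Lemma~\ref{l2.4}(ii), of the form $L_{T_w}$ for some node $w$ with $L_{T_v}\subsetneq L_{T_w}\subsetneq L_{T_u}$ --- but such a $w$ would lie strictly between $v$ and $u$ on the unique tree path, which is impossible when $v$ is a child of $u$. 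Conversely, non-adjacent $u,v$ in $T_X$ are either comparable with a proper intermediate node in the tree (yielding an intermediate ball) or incomparable (so $L_{T_u}\cap L_{T_v}=\emptyset$, ruling out inclusion between the corresponding balls). This exhibits $\phi$ as a rooted-graph isomorphism, in particular showing that $\Gamma_X$ is a tree.

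For the converse, I would argue by contrapositive: if $(X,d)$ is not ultrametric, then $\Gamma_X$ contains a cycle. Pick $x,y,z\in X$ with $d(x,y)>\max\{d(x,z),d(y,z)\}$ and consider $B_1=B_{d(y,z)}(y)$ and $B_2=B_{d(x,z)}(x)$; both contain $z$, yet $y\notin B_2$ and $x\notin B_1$, so $B_1$ and $B_2$ are incomparable under inclusion. Hence the sub-poset $\mathcal{F}_z=\{B\in\mathbf{B}_X:z\in B\}$ of $(\mathbf{B}_X,\subseteq)$ has unique minimum $\{z\}=B_0(z)$, unique maximum $X$, and is not a chain. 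Since any ball lying strictly between two members of $\mathcal{F}_z$ automatically belongs to $\mathcal{F}_z$, the cover relation on $\mathcal{F}_z$ agrees with the one on $\mathbf{B}_X$ restricted to $\mathcal{F}_z$, so the Hasse diagram of $\mathcal{F}_z$ is an induced subgraph of $\Gamma_X$.

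It then suffices to invoke the standard fact that the Hasse diagram of a finite poset with unique minimum and unique maximum that is not a chain contains a cycle: take two distinct maximal chains from the minimum to the maximum; viewed as simple paths in the Hasse diagram they share endpoints but differ, and following them from their first point of divergence to their first point of reconvergence produces two internally disjoint paths whose union is a cycle. This cycle lives inside $\Gamma_X$, contradicting the assumption that $\Gamma_X$ is a tree. The main obstacle of the proof is precisely this last step: carefully setting up the sub-poset $\mathcal{F}_z$, verifying that its cover relation is inherited from $\mathbf{B}_X$, and extracting a genuine cycle from two distinct maximal chains via the divergence--reconvergence argument; the forward direction is, by contrast, largely a translation of Lemma~\ref{l2.4}.
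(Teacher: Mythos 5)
Your proof is correct. The forward direction (ultrametric $\Rightarrow$ $\Gamma_X\cong\overline{T}_X$) is essentially the paper's own argument: bijectivity and root-preservation from Lemma~\ref{l2.4}, plus the identification of the child relation in $T_X$ with the cover relation on $\mathbf{B}_X$. The converse is where you genuinely diverge. The paper starts from the same two incomparable balls $B^1\ni x_3$, $B^2\ni x_3$, but then introduces two auxiliary balls --- a maximal ball $B^0\subseteq B^1\cap B^2$ and a minimal ball $B^3\supseteq B^1\cup B^2$ --- and explicitly concatenates saturated chains $L(B^0,B^1)$, $L(B^1,B^3)$ (resp.\ through $B^2$) into two distinct $B^0$--$B^3$ paths in $\Gamma_X$, one avoiding $B^2$ and one avoiding $B^1$; this directly violates the unique-path characterization of trees. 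You instead pass to the sub-poset $\mathcal{F}_z$ of balls containing $z$, observe that its cover relation is inherited from $\mathbf{B}_X$ (which is the one point that must be checked, and you check it correctly: any ball between two members of $\mathcal{F}_z$ contains $z$), and then extract a cycle from two distinct maximal chains via the divergence--reconvergence argument. The two routes are close in spirit, but yours buys a cleaner setup --- $\mathcal{F}_z$ comes with a free minimum $\{z\}$ and maximum $X$, so you never need to construct $B^0$ and $B^3$ or verify that $B^2$ avoids one path and $B^1$ the other --- at the cost of having to be careful that the ``first reconvergence'' step really yields a cycle with at least three vertices (it does: a two-edge closed walk would force the chains to agree at the divergence index). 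Both arguments are complete; neither has a gap.
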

\begin{proof}
Suppose that $(X, d)$ is not an ultrametric space. Then there are distinct $x_1$, $x_2$, $x_3 \in X$ such that
\begin{equation}\label{e2.3}
d(x_1, x_2) > \max\{d(x_1, x_3), d(x_3, x_2)\}.
\end{equation}
Let $B^1 = B_{r_1}(x_1)$ with $r_1 = d(x_1, x_3)$ and let $B^2 = B_{r_2}(x_2)$  with $r_2 = d(x_2, x_3)$. It is evident that
\begin{equation}\label{e2.4}
\{x_3\} \subseteq B^1 \cap B^2 \subseteq B^1 \cup B^2 \subseteq X.
\end{equation}
Using \eqref{e2.4} and the finiteness of $\mathbf{B}_X$ we can find $B^0 \in \mathbf{B}_X$ such that $B^0 \subseteq B^1 \cap B^2$ and
$$
(B^0 \subseteq B \subseteq B^1 \cap B^2) \Rightarrow (B^0 = B)
$$
for every $B\in \mathbf{B}_X$. Similarly there exists $B^3 \in \mathbf{B}_X$ such that
$$
B^3 \supseteq B^1 \cup B^2
$$
and
$$
(B^1 \cup B^2 \subseteq B \subseteq B^3) \Rightarrow (B = B^3)
$$
for every $B\in \mathbf{B}_X$.

For all distinct balls $C$, $D \in \mathbf{B}_X$ satisfying $C \subseteq D$, there is a chain
$$
L(C, D) = \{A_1, \ldots, A_n\} \subseteq \mathbf{B}_X
$$
such that
$$
A_1 = C, \quad A_n = D, \quad A_1 \subseteq A_2 \subseteq \ldots \subseteq A_n
$$
and, for every $B \in \mathbf{B}_X$ and $i=1$, $\ldots$, $n-1$,
\begin{equation}\label{e2.5}
(A_i \subseteq B \subseteq A_{i+1}) \Rightarrow (B = A_i \text{ or } B = A_{i+1}).
\end{equation}
Using~\eqref{e2.3} we see that $x_1 \notin B^2$ and $x_2 \notin B^1$. Consequently the balls $B^0$, $B^1$, $B^2$, $B^3$ are pairwise distinct, so that there exist the chains $L(B^0, B^1)$, $L(B^0, B^2)$, $L(B^1, B^3)$ and $L(B^2, B^3)$. Let $L(B^0, B^1) = (A_1, \ldots, A_n)$ and $L(B^1, B^3) =  (A_n, \ldots, A_{n+m})$. We claim that $L(B^0, B^1, B^3) := (A_1, \ldots, A_n, \ldots, A_{n+m})$ is a path in the graph $\Gamma_X$ joining $B^0=A_1$ and $B^3 = A_{n+m}$.

Indeed, from the definition of $L(B^0, B^1)$ and $L(B^1, B^3)$ it follows that $A_i \neq A_j$ for any distinct $i$, $j \in \{1, \ldots, n+m\}$. Moreover, \eqref{e2.5} implies that $\{A_i, A_{i+1}\} \in E(\Gamma_X)$ for every $i \in \{1, \ldots, n+m-1\}$. Hence $L(B^0, B^1, B^3)$ is a path in $\Gamma_X$. Inequality~\eqref{e2.3} implies that $x_1 \notin B^2$ and $x_2 \notin B^1$, consequently $B^1 \nsubseteqq B^2$ and $B^2 \nsubseteqq B^1$. Since for every vertex $B\in V(L(B^0, B^1, B^3))$ we have
$$
B \subseteq B^1 \text{ or } B^1 \subseteq B,
$$
the ball $B^2$ is not a vertex of the path $L(B^0, B^1, B^3)$. Similarly we can construct a path $L(B^0, B^2, B^3)$ such that $B^1 \notin V(L(B^0, B^2, B^3))$. Consequently the vertices $B^0$ and $B^3$ can be joined by two different paths in $\Gamma_X$. Hence $\Gamma_X$ is not a tree.

Suppose now that $(X, d)$ is an ultrametric space. We must show that $\Gamma_X$ is a tree and that $(\Gamma_X, X)$ is isomorphic to the rooted tree $\overline{T}_X$. By Lemma~\ref{l2.4}, for every $v \in V(\Gamma_X)$, we have $L_{T_v} \in \mathbf{B}_X$. Consequently the mapping
\begin{equation}\label{e2.6}
V(\overline{T}_X) \ni v \mapsto L_{T_v} \in V(\Gamma_X)
\end{equation}
is correctly defined. Hence it is sufficient to show that this mapping is an isomorphism of the rooted tree $\overline{T}_X$ and $(\Gamma_X, X)$.

Using Lemma~\ref{l2.4} again we obtain that~\eqref{e2.6} is bijective. It is also clear that the root of $\overline{T}_X$ corresponds to $X$ under mapping~\eqref{e2.6}. It still remains to show that
\begin{equation}\label{e2.7}
(\{v_1, v_2\} \in E(\overline{T}_X)) \Leftrightarrow (\{L_{T_{v_1}}, L_{T_{v_2}}\} \in E(\Gamma_X))
\end{equation}
holds for all $v_1$, $v_2 \in V(\overline{T}_X)$.

Write $B_1 = L_{T_{v_1}}$ and $B_2 = L_{T_{v_2}}$. Then it follows from the definition of the rooted subtrees that $B_1 \subseteq B_2$ if and only if $v_1$ is a node of the tree $T_{v_2}$. Moreover if $B_1 \subseteq B_2$, then
$$
(B_1 \subseteq B \subseteq B_2) \Rightarrow (B_2 = B \text{ or } B_1 = B)
$$
holds for all $B \in \mathbf{B}_X$ if and only if $v_1$ is a direct successor of $v_2$. Statement~\eqref{e2.7} follows.
\end{proof}

\begin{figure}[h]
\begin{tikzpicture}[scale=0.8]
\draw (0,0) -- (1,2) -- (2,4) -- (3,2) -- (4,0);
\draw (1,2) -- (2,0); \draw (3,2) -- (2,0); \fill[black] (2,0) circle(2pt);
\fill[black] (0,0) circle(2pt) -- (1,2) circle(2pt) -- (2,4) circle(2pt) -- (3,2) circle(2pt) -- (4,0) circle(2pt);
\end{tikzpicture}
\caption{For all nonultrametric metric triangles $X$ their graphs $\Gamma_X$ are isomorphic to the depicted graph.}
\label{fig1*}
\end{figure}
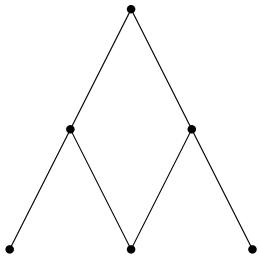

Since a connected graph $G$ is a tree if and only if
$$
|V(G)|=|E(G)|+1
$$
(see, for example, \cite[Corollary~1.5.3]{Di}), Theorem~\ref{t2.7} implies the following

\begin{cor}\label{c2.9}
Let $X$ be a finite nonempty metric space. Then $X$ is an ultrametric space if and only if
$$
|V(\Gamma_X)| = |E(\Gamma_X)| + 1.
$$
\end{cor}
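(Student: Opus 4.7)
The plan is to reduce Corollary~\ref{c2.9} to Theorem~\ref{t2.7} through the standard criterion, cited in the paper, that a connected graph $G$ is a tree if and only if $|V(G)|=|E(G)|+1$. The forward direction is then immediate: if $(X,d)$ is ultrametric, Theorem~\ref{t2.7} gives that $\Gamma_X$ is a tree, and any tree (being connected) satisfies the desired equality.

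For the converse, suppose only that $|V(\Gamma_X)|=|E(\Gamma_X)|+1$. To apply the cited criterion in the opposite direction I need $\Gamma_X$ to be connected, so the key intermediate step is to verify that $\Gamma_X$ is connected for \emph{every} finite metric space, ultrametric or not. Connectedness will be established by the same combinatorial chain construction used in the proof of Theorem~\ref{t2.7}: given $B\in\mathbf{B}_X$, since $\mathbf{B}_X$ is finite and $B\subseteq X\in\mathbf{B}_X$, I can extract a finite sequence $L(B,X)=(A_1,\ldots,A_n)$ with $A_1=B$, $A_n=X$, $A_i\subsetneq A_{i+1}$, and such that no ball of $\mathbf{B}_X$ lies strictly between two consecutive $A_i$ and $A_{i+1}$. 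By Definition~\ref{d2.7}, consecutive pairs in this sequence are edges of $\Gamma_X$, giving a path from $B$ to $X$ in $\Gamma_X$. Since $B$ was arbitrary, every vertex of $\Gamma_X$ is connected to $X$, so $\Gamma_X$ is connected.

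With connectedness in hand, the hypothesis $|V(\Gamma_X)|=|E(\Gamma_X)|+1$ forces $\Gamma_X$ to be a tree by the cited equivalence, and Theorem~\ref{t2.7} then yields that $(X,d)$ is ultrametric, completing the proof.

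I do not anticipate any serious obstacle. The only point that requires a brief justification beyond invoking Theorem~\ref{t2.7} is the connectedness of $\Gamma_X$ in the non-ultrametric case, and this is a purely order-theoretic fact about the finite poset $(\mathbf{B}_X,\subseteq)$ which has a maximum element $X$. A sanity check is provided by Figure~\ref{fig1*}: for a non-ultrametric triangle the depicted $\Gamma_X$ is indeed connected (with $|V|=|E|$, confirming that the equality in Corollary~\ref{c2.9} fails precisely because of the presence of a cycle, not because of disconnectedness).
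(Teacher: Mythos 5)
Your proof is correct and follows essentially the same route as the paper, which derives the corollary directly from Theorem~\ref{t2.7} together with the cited criterion that a connected graph is a tree if and only if $|V|=|E|+1$. The only difference is that you explicitly verify the connectedness of $\Gamma_X$ via the chain construction, a point the paper leaves implicit here and only asserts later (``It is easy to see that $\Gamma_X$ is connected'') in the proof of Corollary~\ref{c2.10}; making it explicit is a reasonable refinement, not a departure.
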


Recall that a graph $H$ is a \emph{spanning subgraph} of a graph $G$ if $V(G)=V(H)$ and $E(H) \subseteq E(G)$. A graph is connected if and only if it has a spanning tree \cite[Theorem~4.6]{BM}.

\begin{cor}\label{c2.10}
Let $X$ be a finite nonempty metric space and $Y$ be a finite nonempty ultrametric space. If $|\mathbf{B}_X| = |\mathbf{B}_Y|$, then the inequality
\begin{equation}\label{e2.8}
|E(\Gamma_X)| \geq |E(\Gamma_Y)|
\end{equation}
holds. Furthermore, equality in~\eqref{e2.8} occurs if and only if $X$ is ultrametric.
\end{cor}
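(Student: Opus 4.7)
The strategy is to reduce both sides of the inequality to $|\mathbf{B}_Y|-1$ by exploiting Theorem~\ref{t2.7} on the right and a connectedness argument for $\Gamma_X$ on the left. Since $Y$ is ultrametric, Theorem~\ref{t2.7} gives that $\Gamma_Y$ is a tree, hence
$$
|E(\Gamma_Y)| = |V(\Gamma_Y)| - 1 = |\mathbf{B}_Y| - 1.
$$
The assumption $|\mathbf{B}_X| = |\mathbf{B}_Y|$ then lets us restate the desired inequality as $|E(\Gamma_X)| \geq |V(\Gamma_X)| - 1$, which, since $V(\Gamma_X)$ is finite, would follow from showing that $\Gamma_X$ is connected and then invoking the spanning tree criterion of \cite[Theorem~4.6]{BM} together with Corollary~\ref{c2.9}.

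So the core step is to verify that $\Gamma_X$ is connected for an arbitrary finite metric space $X$. First I would note that $X \in \mathbf{B}_X$, since $B_r(t) = X$ whenever $r \geq \diam X$, so that $X$ is an available ``universal'' vertex to aim paths at. Then, for an arbitrary ball $B \in \mathbf{B}_X$, I would build a chain $L(B,X) = (A_1, \ldots, A_n)$ exactly as in the proof of Theorem~\ref{t2.7}: starting from $A_1 = B$, at each step choose $A_{i+1}$ to be a minimal ball strictly containing $A_i$ among those sitting inside $X$; the finiteness of $\mathbf{B}_X$ guarantees termination at $X$. By construction condition~(2) of Definition~\ref{d2.7} holds between consecutive elements, so $L(B,X)$ is a path in $\Gamma_X$ from $B$ to $X$. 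Hence every vertex is connected to $X$, and $\Gamma_X$ is connected.

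Having connectivity, a spanning tree of $\Gamma_X$ has exactly $|V(\Gamma_X)| - 1 = |\mathbf{B}_X| - 1$ edges, giving
$$
|E(\Gamma_X)| \geq |\mathbf{B}_X| - 1 = |\mathbf{B}_Y| - 1 = |E(\Gamma_Y)|,
$$
which proves~\eqref{e2.8}. For the equality case, equality in~\eqref{e2.8} is equivalent to $|E(\Gamma_X)| = |V(\Gamma_X)| - 1$, and since $\Gamma_X$ is connected this is equivalent to $\Gamma_X$ being a tree; by Theorem~\ref{t2.7} (or equivalently Corollary~\ref{c2.9}) this holds if and only if $X$ is ultrametric.

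The only delicate point is the chain construction that witnesses connectedness of $\Gamma_X$: one must be careful that the minimal choice at each step genuinely produces an edge of $\Gamma_X$ in the sense of Definition~\ref{d2.7}, and that the process strictly increases cardinality so as to terminate at $X$. Both are transparent from the finiteness of $\mathbf{B}_X$ and follow the pattern already used in the proof of Theorem~\ref{t2.7}, so no new idea is required beyond reusing that lemma-like construction.
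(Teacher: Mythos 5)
Your proof is correct and follows essentially the same route as the paper: the paper also reduces the inequality to the connectedness of $\Gamma_X$, takes a spanning tree $T$ with $|E(\Gamma_X)| \geq |E(T)| = |V(T)|-1$, and settles the equality case via Corollary~\ref{c2.9}. The only difference is that the paper dismisses the connectedness of $\Gamma_X$ with ``it is easy to see,'' whereas you spell out the chain construction borrowed from the proof of Theorem~\ref{t2.7}; that added detail is sound and harmless.
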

\begin{proof}
It is easy to see that $\Gamma_X$ is connected. Let $T$ be a spanning tree of $\Gamma_X$. Then we have
\begin{equation}\label{e2.9}
|E(\Gamma_X)| \geq |E(T)|.
\end{equation}
Since
$$
|V(T)| = |V(\Gamma_Y)| = |V(\Gamma_X)| = |\mathbf{B}_X| = |\mathbf{B}_Y|
$$
and $|V(T)| = |E(T)|+1$ and $|V(\Gamma_Y)|=|E(\Gamma_Y)| + 1$, inequality \eqref{e2.9} implies \eqref{e2.8}. Now using Corollary~\ref{c2.9} we obtain that
$$
|E(\Gamma_X)| = |E(\Gamma_Y)|
$$
holds if and only if $X$ is an ultrametric space.
\end{proof}

\section{Finite ultrametric spaces having maximum rigidity}

Let $(X, d)$ be a metric space and let $\operatorname{Iso}(X)$ be the group of isometries of $(X, d)$. We say that $(X, d)$ is \emph{rigid}  if $|\operatorname{Iso}(X)|=1$. It is clear that $(X, d)$ is rigid if and only if $g(x)=x$ for every $x \in X$ and every $g \in \Iso(X)$.

For every self-map $f\colon X\to X$ we denote by $\Fix(f)$ the set of fixed points of $f$. Using this denotation we obtain that a finite metric space $(X, d)$ is rigid if and only if
$$
\min_{g \in \operatorname{Iso}(X)} |\Fix(g)|=|X|.
$$

\begin{prop}\label{pr3.1}
Let $(X, d)$ be a finite ultrametric space with $|X| \geq 2$. Then $(X, d)$ is nonrigid.
\end{prop}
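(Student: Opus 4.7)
The plan is to construct an explicit non-identity self-isometry of $(X, d)$ by transposing two carefully chosen points of $X$. The whole argument runs through the representing tree $T_X$ and the distance formula from Lemma~\ref{l2.3}.

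First I would select an internal node $v^* \in V(T_X)$ of maximal depth. Such a node exists because the root of $T_X$ is itself internal: since $|X| \geq 2$, the root carries the positive label $\diam X$. By the maximality of the depth, every direct successor of $v^*$ in $T_X$ must be a leaf, for otherwise that successor would be an internal node strictly deeper than $v^*$. Moreover, $v^*$ represents some subset $Y \subseteq X$ with $|Y| \geq 2$, so Lemma~\ref{t13} gives $G_Y = G_Y[Y_1, \ldots, Y_k]$ with $k \geq 2$. Consequently $v^*$ has $k \geq 2$ direct successors in $T_X$, all of them leaves. Pick two, say $\{x\}$ and $\{y\}$ with $x \neq y$.

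Next, I would define $f\colon X \to X$ by $f(x) = y$, $f(y) = x$, and $f(z) = z$ for every $z \in X \setminus \{x, y\}$. Plainly $f$ is a bijection and not the identity; the task is to verify $f \in \Iso(X)$. The only nontrivial distances to check are $d(x, z)$ versus $d(y, z)$ for $z \in X \setminus \{x, y\}$. By Lemma~\ref{l2.3}, each of these equals the maximum label along the unique path in $T_X$ joining the corresponding leaves. Since $\{x\}$ and $\{y\}$ are both direct successors of $v^*$ and have label $0$, the paths have the respective forms $(\{x\}, v^*, u_1, \ldots, u_m = \{z\})$ and $(\{y\}, v^*, u_1, \ldots, u_m = \{z\})$, differing only in the starting leaf. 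Hence their maximum labels coincide, so $d(x, z) = d(y, z)$. (If $\{z\}$ itself lies in $T_{v^*}$, then $\{z\}$ must also be a direct leaf-successor of $v^*$, and the two paths collapse to $(\{x\}, v^*, \{z\})$ and $(\{y\}, v^*, \{z\})$, with the same conclusion.)

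The main point requiring care is the structural step about $v^*$: that it has at least two direct successors and that all of them are leaves. Both facts follow from the choice of $v^*$ as a deepest internal node, combined with Lemma~\ref{t13} applied to the ball that $v^*$ represents. Everything else is a direct invocation of the leaf-to-leaf distance formula. As a byproduct, the constructed $f$ has exactly $|X| - 2$ fixed points, matching the stronger assertion in the abstract.
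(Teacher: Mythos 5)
Your proof is correct and follows essentially the same route as the paper: both select an internal node of $T_X$ all of whose direct successors are leaves (guaranteed to exist by finiteness, with at least two children by Lemma~\ref{t13}), permute some of those leaf-children, extend by the identity, and invoke Lemma~\ref{l2.3} to verify the result is an isometry. The only cosmetic difference is that you transpose exactly two points, whereas the paper applies a fixed-point-free bijection to the whole ball $L_{T_v}$; both yield $|\Fix(g)| \leq |X|-2$.
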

\begin{proof}
It is sufficient to construct $g \in \Iso(X)$ such that
\begin{equation}\label{e3.1*}
|\Fix(g)| \leq |X|-2.
\end{equation}
Since $X$ is finite, the representing tree $T_X$ contains an internal node $v$ such that the direct successors of $v$ are leaves of $T_v$. We have the inequality
$$
|L_{T_v}| \geq 2
$$
because $v$ is internal. Hence there is a fixed point free bijection $\psi: L_{T_v} \to L_{T_v}$, i.e.
\begin{equation}\label{e3.2*}
|\Fix(\psi)|=0
\end{equation}
holds. The leaves of $T_X$ is the one-point subsets of $X$. Identifying the leaves with their respective points of $X$ we can define a bijection $g: X\to X$ as
\begin{equation}\label{e3.3*}
g(x) =
\left\{\begin{array}{ll}
\psi(x) & \text{ if } x \in L_{T_v}\\
x & \text{ if } x \in X\setminus L_{T_v}.
\end{array}
\right.
\end{equation}
Lemma~\ref{l2.3} implies that $g \in \Iso(X)$. From \eqref{e3.2*} and \eqref{e3.3*} we obtain the equality
$$
|\Fix(g)| = |X\setminus L_{T_v}| = |X| - |L_{T_v}|.
$$
Since $|L_{T_v}| \geq 2$, inequality~\eqref{e3.1*} follows.
\end{proof}

\begin{rem}
The Fibonacci space is an interesting example of a compact infinite rigid ultrametric space. (See \cite{H04} and \cite{BH2} for some interesting properties of the Fibonacci space.)
\end{rem}

If a metric space $(X, d)$ is finite, nonempty and nonrigid, then the inequality
\begin{equation}\label{e3.1}
\min_{g \in \operatorname{Iso}(X)} |\Fix(g)| \leq |X|-2
\end{equation}
holds, because the existence of $|X|-1$ fixed points for $g \in \operatorname{Iso}(X)$ implies that $g$ is identical.

The quantity $\min_{g \in \operatorname{Iso}(X)} |\Fix(g)|$ can be considered as a measure for ``rigidness'' for finite metric spaces $(X, d)$. Thus the finite ultrametric spaces satisfying the equality
\begin{equation}\label{e3.2}
\min_{g \in \operatorname{Iso}(X)} |\Fix(g)| = |X|-2,
\end{equation}
are as rigid as possible. Let us denote by $\mathfrak{R}$ the class of all finite ultrametric spaces $(X, d)$ which satisfy this equality.

\begin{lem}\label{l3.2}
Let $(X, d)$ be a nonempty finite ultrametric space. Then for every set of partial self-isometries
$$
S = \{\psi_i: B_i \to B_i: i \in I\},
$$
where $I$ is an index set, such that
$$
B_i \in \mathbf{B}_X, \quad B_i \cap B_j = \varnothing
$$
for all distinct $i$, $j \in I$, there exists an isometry $\psi \in \Iso(X)$ for which the restriction $\psi|_{B_i}$ equals $\psi_i$ for every $i \in I$.
\end{lem}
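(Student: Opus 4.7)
The plan is to extend the $\psi_i$ by the identity. Define $\psi\colon X \to X$ by $\psi(x) = \psi_i(x)$ when $x \in B_i$ for some (necessarily unique, by disjointness) index $i \in I$, and $\psi(x) = x$ otherwise. Because each $\psi_i$ bijects $B_i$ onto itself and the identity bijects $X \setminus \bigcup_{i \in I} B_i$ onto itself, $\psi$ is a bijection of $X$, and $\psi|_{B_i} = \psi_i$ by construction. All that remains is to check that $\psi$ preserves distances.

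The key fact I would isolate first is a standard consequence of the strong triangle inequality: if $B = B_r(t) \in \mathbf{B}_X$ and $y \in X \setminus B$, then $d(y, x) = d(y, t)$ for every $x \in B$. Indeed, $d(y, t) > r \geq d(x, t)$, so the ultrametric inequality forces $d(y, x) = \max\{d(y, t), d(t, x)\} = d(y, t)$. Equivalently, via Lemmas~\ref{l2.4} and~\ref{l2.3}, the path in $T_X$ from any leaf of $T_v$ (where $L_{T_v} = B$) to $y$ exits $T_v$ through the parent of $v$, and the maximum label along it is determined by $y$ and $v$ rather than by the chosen leaf. In particular, the distance between two disjoint balls $B_i$ and $B_j$ is constant on $B_i \times B_j$.

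With this ingredient in hand, the verification that $d(\psi(x), \psi(y)) = d(x, y)$ splits into four routine cases according to whether $x$ and $y$ belong to some $B_i$. If they share a single ball $B_i$, the equality follows because $\psi_i$ is an isometry; if neither lies in any $B_i$, both sides agree trivially; and the mixed cases (one point inside a ball and one outside, or the two points inside different balls) follow from one or two applications of the invariance property above, using that $\psi$ sends each $B_i$ to itself and that the exterior hypothesis is guaranteed by the disjointness assumption. I do not foresee a serious obstacle here: the whole argument hinges on isolating the ball-to-exterior invariance, after which the case analysis is essentially automatic. The mildest subtlety is the symmetry in the two-ball case, where one applies the invariance twice (once to pass from $(x,y)$ to $(\psi(x), y)$, then from $(\psi(x),y)$ to $(\psi(x), \psi(y))$), each step needing a disjointness check to justify the exterior hypothesis.
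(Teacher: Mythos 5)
Your proposal is correct and follows essentially the same route as the paper: both define $\psi$ as the identity extension of the $\psi_i$, and the isometry check rests on the same ball-to-exterior invariance of distances. The paper merely compresses the verification into a citation of Lemma~\ref{t13}, whereas you spell out the case analysis explicitly; the content is the same.
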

\begin{proof}
Let us define a required $\psi$ by the rule
$$
\psi(x) = \left\{
\begin{array}{ll}
\psi_i(x), & \text{if } x \in B_i, i \in I,\\
x , & \text{if } x \in X \setminus \left(\bigcup\limits_{i \in I}B_i\right).
\end{array}
\right.
$$
It follows from Lemma~\ref{t13} that $\psi \in \Iso(X)$.
\end{proof}

Recall that a rooted tree is \emph{strictly $n$-ary} if every internal node has exactly $n$ children. In the case $n=2$ such tree is called \emph{strictly binary}. A level of a vertex of a rooted tree $(T, v^*)$ can be defined by the following inductive rule: The level of the root $v^*$ is zero and if $u \in V(T)$ has a level $x$, then every direct successor of $u$ has the level $x+1$.

\begin{thm}\label{th3.3}
Let $(X, d)$ be a finite ultrametric space with $|X| \geq 2$. Then the following statements are equivalent:
\begin{itemize}
\item [(i)] $(X, d) \in \mathfrak{R}$;
\item [(ii)] $|\Iso(X)|=2$;
\item [(iii)] $T_X$ is strictly binary with exactly one inner node at each level except the last level.
\end{itemize}
\end{thm}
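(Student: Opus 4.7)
The plan is to prove the three implications in the cycle $(i)\Rightarrow (ii)\Rightarrow (iii)\Rightarrow (i)$, translating via Theorem~\ref{t2.6} between isometries of $X$ and label-preserving automorphisms of $T_X$, and using Lemma~\ref{l3.2} and Proposition~\ref{pr3.1} to control their supports.

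For $(i)\Rightarrow (ii)$ I would argue that if $(X,d)\in\mathfrak{R}$ then any non-identity $g\in \Iso(X)$ satisfies both $|\Fix(g)|\le |X|-2$ (by bijectivity) and $|\Fix(g)|\ge |X|-2$ (by hypothesis), hence $g$ is a transposition. Two distinct non-identity isometries $g_1,g_2$ would give a composition $g_1g_2$ that is either a $3$-cycle (if their supports share one element, forcing $|\Fix(g_1g_2)|=|X|-3$) or moves four points (if the supports are disjoint), both contradicting the minimum $|X|-2$. Hence the non-identity isometry is unique, and Proposition~\ref{pr3.1} (which forbids $|\Iso(X)|=1$) then gives $|\Iso(X)|=2$.

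For $(ii)\Rightarrow (iii)$ I would translate the hypothesis through Theorem~\ref{t2.6} into $|\mathrm{Aut}(T_X)|=2$, where $\mathrm{Aut}$ denotes the group of label-preserving rooted-tree automorphisms. The decisive input is that for every internal node $v$ of $T_X$, Proposition~\ref{pr3.1} yields a non-identity isometry of the subspace $L_{T_v}$, which by Lemma~\ref{l3.2} extends to an element of $\Iso(X)$ supported inside $L_{T_v}$. This guarantees that every internal subtree of $T_X$ carries at least two label-preserving automorphisms and, crucially, that automorphisms supported on disjoint internal subtrees combine independently. A case analysis at the root then rules out all configurations except (a) exactly two leaf children and nothing else (the case $|X|=2$) and (b) exactly one leaf child and exactly one internal child whose subtree itself satisfies $|\mathrm{Aut}|=2$: a second internal child would give an independent factor $2$ and force $|\mathrm{Aut}(T_X)|\ge 4$; an internal child together with two or more leaves would force $|\mathrm{Aut}(T_X)|\ge 4$ via the swap of two leaves composed with the internal isometry; three or more leaf children without an internal child would already give $|\mathrm{Aut}(T_X)|\ge 6$. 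Iterating the dichotomy on the unique internal child yields the structure described in~(iii).

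For $(iii)\Rightarrow (i)$ a direct inspection shows that in a tree of the form described in~(iii) every internal node above the last has two children with different subtree types (one leaf, one non-leaf), so any label-preserving automorphism must fix the chain of internal nodes together with every leaf hanging off it; only the two leaves at the bottom internal node can be swapped. Theorem~\ref{t2.6} then gives $|\Iso(X)|=2$ together with an explicit $g\in \Iso(X)$ exchanging the two bottom-level points of $X$ and fixing all others, so $|\Fix(g)|=|X|-2$ and $(X,d)\in\mathfrak{R}$.

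The most delicate step is $(ii)\Rightarrow (iii)$: one has to articulate precisely the multiplicativity of label-preserving automorphisms over disjoint internal subtrees, for which Lemma~\ref{l3.2} is indispensable, and one must invoke Proposition~\ref{pr3.1} recursively to guarantee that each internal subtree contributes an additional factor of at least~$2$. Once this multiplicativity is in place, the remaining work is elementary case analysis at each level.
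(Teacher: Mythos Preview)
Your proposal is correct and follows essentially the same route as the paper: the cycle $(i)\Rightarrow(ii)\Rightarrow(iii)\Rightarrow(i)$, with $(i)\Rightarrow(ii)$ handled by composing two distinct non-identity transpositions, $(ii)\Rightarrow(iii)$ by a case analysis at each internal node powered by Proposition~\ref{pr3.1} and Lemma~\ref{l3.2}, and $(iii)\Rightarrow(i)$ by the rigidity of the chain of internal nodes. The only cosmetic difference is that in $(ii)\Rightarrow(iii)$ you pass through $\mathrm{Aut}(T_X)$ via Theorem~\ref{t2.6}, whereas the paper works directly in $\Iso(X)$; the underlying counting arguments (two internal children $\Rightarrow |\Iso(X)|\ge 4$, three leaf children $\Rightarrow |\Iso(X)|\ge 6$, one internal plus two leaves $\Rightarrow |\Iso(X)|\ge 4$) are identical.
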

\begin{proof}
\textbf{(i)}$\Rightarrow$\textbf{(ii)}. Let (i) hold and
$$
|\Iso(X)| \neq 2.
$$
By Proposition~\ref{pr3.1}, the ultrametric space $(X, d)$ is nonrigid. Consequently there exist $\psi_1$, $\psi_2 \in \Iso(X)$ such that
$$
\psi_1 \neq \psi_2 \quad\text{and}\quad \psi_1 \neq \id_X \neq \psi_2,
$$
where $\id_X$ is the identical mapping of $X$. Since $(X, d) \in \mathfrak{R}$, we can find the sets $\{x_1^1, x_2^1\}$ and $\{x_1^2, x_2^2\}$ such that $\psi_i(x_1^i) = x_2^i$, $\psi_i(x_2^i) = x_1^i$, $i = 1$, $2$ and
\begin{equation}\label{e3.7}
\{x_1^1, x_2^1\} \neq \{x_1^2, x_2^2\}.
\end{equation}
Note that if $\{x_1^1, x_2^1\} = \{x_1^2, x_2^2\}$, then $\psi_1 = \psi_2$ because all points of the set $X \setminus (\{x_1^1, x_2^1\} \cup \{x_1^2, x_2^2\})$ are fixed points of $\psi_1$ and $\psi_2$.  A short calculation shows that the set $\{x_1^1, x_2^1\} \cup \{x_1^2, x_2^2\}$ contains no fixed points of the composition $\psi_1\circ \psi_2$ and that
$$
\psi_1\circ \psi_2(x) = x
$$
for every $x \in X \setminus (\{x_1^1, x_2^1\} \cup \{x_1^2, x_2^2\})$. Hence we have
$$
|\Fix(\psi_1\circ \psi_2)| = |X|-3 \text{ or } |\Fix(\psi_1\circ \psi_2)| = |X|-4,
$$
that contradicts equality~\eqref{e3.2}.

\textbf{(ii)}$\Rightarrow$\textbf{(iii)}. Let $|\Iso(X)| = 2$. We must prove (iii). First, we prove that:
\begin{itemize}
\item[$(s_1)$] For every inner node $v$ of $T_X$, the set $Ch(v)$ of children of $v$ contains at most one inner node and at most two leaves.
\end{itemize}

Let $v$ be an inner node of $T_X$ and let $v_1$, $v_2 \in Ch(v)$ be distinct inner nodes. Then the balls $B_1 = L_{T_{v_1}}$ and $B_2 = L_{T_{v_2}}$ are disjoint and the inequalities
$$
|B_1| \geq 2, \quad |B_2| \geq 2
$$
hold. By Proposition~\ref{pr3.1} the metric spaces $(B_1, d)$ and $(B_2, d)$ are nonrigid. Hence there are $\psi_1 \in \Iso(B_1)$ and $\psi_2 \in \Iso(B_2)$ such that
$$
\psi_1 \neq \id_{B_1}, \text{ and } \psi_2 \neq \id_{B_2},
$$
where $\id_{B_i}$ is the identical mapping of the set $B_i$, $i=1$, $2$. By Lemma~\ref{l3.2} there are $\psi^1$, $\psi^2 \in \Iso(X)$ such that
$$
\psi^1|_{B_1}=\psi_1, \quad \psi^1|_{B_2}=\id_{B_2}, \quad \psi^2|_{B_1}=\id_{B_1}, \quad \psi^2|_{B_2}=\psi_2.
$$
Since $\psi^1$, $\psi^2 \in \Iso(X)$ and $\psi^1 \neq \psi^2$ and $\psi^1 \neq \id_X \neq \psi^2$, we have
$$
|\Iso(X)| \geq 3,
$$
contrary to $|\Iso(X)|=2$. The first part of $(s_1)$ is proved. In what follows we write $B = L_{T_v}$ for short. Let $G_B = G_B[X_1, \ldots, X_k]$ be the diametrical graph of the ball $B$. A leaf $\{x\}$ of $T_v$ is a child of $v$ if and only if there is $i \in \{1, \ldots, k\}$ such that $X_i = \{x\}$. Suppose that there are some three distinct leaves among the children of $v$. For certainty, we can assume that
$$
X_1 = \{x_1\}, \quad X_2 = \{x_2\} \quad\text{and}\quad X_3 = \{x_3\}.
$$
Let $S = \{x_1, x_2, x_3\}$. Using the definition of the diametrical graphs we can easily prove that every bijection $\alpha\colon S \to S$ can be extended to an isometry of $B$. Consequently, by Lemma~\ref{l3.2}, there is an extension of $\alpha$ to an isometry of $S$. Since $\Sym(S)$ (the group of symmetries of $S$) has the order $6$, we have $|\Iso(X)| \geq 6$, contrary to $|\Iso(X)| =2$. Statement $(s_1)$ follows.

To finished the proof of (iii), it suffices to show that $T_X$ does not contain any node with exactly $3$ children. Suppose contrary that $v$ is an node of $T_X$ with $3$ children. Write $B = L_{T_v}$. Then we have $G_B = G_B[X_1, X_2, X_3]$. Using $(s_1)$ we can suppose that
$$
|X_1| = |X_2| =1 \quad\text{and}\quad |X_3| \geq 2.
$$
Let $\{x_1\}=X_1$ and $\{x_2\}=X_2$, $x_1$, $x_2 \in X$, and let $S = \{x_1, x_2\}$. Let us consider $\alpha \in \Sym(S)$ and $\beta \in \Iso(X_3)$. Define $\psi: B \to B$ as
$$
\psi(x) = \left\{
\begin{array}{ll}
\alpha(x) & \text{if } x \in S,\\
\beta(x) & \text{if } x \in X_3.
\end{array}
\right.
$$
It follows directly from Definition~\ref{d2} that $\psi \in \Iso(B)$. Hence, $|\Iso(B)| \geq 4$. This inequality and Lemma~\ref{l3.2} imply the inequality $|\Iso(X)| \geq 4$, contrary to $|\Iso(X)| = 2$.

\textbf{(iii)}$\Rightarrow$\textbf{(i)}. Let (iii) hold. By Theorem~\ref{t2.6} to prove (i) it suffices to show that every self-isomorphism $f:V(T_X) \to V(T_X)$ of representing tree $T_X$ has at most two points which are not fixed points. Let us consider a self-isomorphism $f\colon V(T_X) \to V(T_X)$. The root $X$ is evidently a fixed point of $f$. If $v_1'$ and $v_2'$ are the nodes of the second level and $v_2'$ is inner, then according to (iii) $v_2'$ is a leaf and $f(v_1')=v_1'$ and $f(v_2')=v_2'$ because $f$ preserves the levels, the inner nodes and the labels. Similarly we can prove that all vertices on an arbitrary level are fixed if it is not the last level of $T_X$. Now it is sufficient to note that the last level contains exactly two vertices, because $T_X$ is strictly binary and the previous level contains exactly one inner node of $T_X$.
\end{proof}

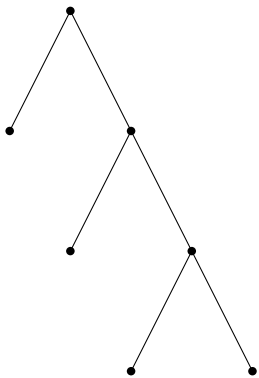
\begin{figure}[h]
\begin{tikzpicture}[scale=0.8]
\draw (0,4) -- (1,6) -- (4,0);
\fill[black] (1,6) circle(2pt);
\fill[black] (0,4) circle(2pt);
\fill[black] (2,4) circle(2pt);
\fill[black] (3,2) circle(2pt);
\fill[black] (4,0) circle(2pt);
\draw (2,4) -- (1,2);
\draw (3,2) -- (2,0);
\fill[black] (1,2) circle(2pt);
\fill[black] (2,0) circle(2pt);
\end{tikzpicture}
\caption{The representing tree $T_X$ for $X \in \mathfrak{R}$ with $|X|=4$.}
\label{fig1}
\end{figure}

Using Theorem~\ref{th3.3} we can obtain the following extremal property of ultrametric spaces belonging to $\mathfrak{R}$.

\begin{cor}\label{c3.4}
Let $X$ and $Y$ be finite ultrametric spaces with $|X|=|Y|$. If $Y \in \mathfrak{R}$, then the inequality
\begin{equation}\label{e3.8}
|\Sp{X}| \leq |\Sp{Y}|
\end{equation}
holds.
\end{cor}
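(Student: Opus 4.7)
The strategy is to show that $|\Sp{Y}|=|Y|$ whenever $Y\in\mathfrak{R}$, so that the Gomory--Hu inequality $|\Sp{X}|\leq|X|$ recalled in the introduction immediately yields~\eqref{e3.8}.

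The first step is to apply Theorem~\ref{th3.3} to obtain that $T_Y$ is strictly binary with exactly one inner node at each level except the last. A direct inspection of this shape shows that the inner nodes of $T_Y$ form a single chain $u_0,u_1,\ldots,u_{h-1}$, where $u_k$ is the unique inner node of level $k$ and $h$ is the height of $T_Y$; each $u_k$ with $k<h-1$ has one leaf child and one inner child (namely $u_{k+1}$), while $u_{h-1}$ has two leaf children. Counting leaves level by level then gives $|Y|=h+1$.

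The second step is to show that the labels along this chain are strictly decreasing: $l(u_0)>l(u_1)>\cdots>l(u_{h-1})$. This follows from the construction of the representing tree together with Lemma~\ref{t13}, since $L_{T_{u_{k+1}}}$ is one of the partition classes of the diametrical graph on $L_{T_{u_k}}$, and by Definition~\ref{d2} distances within any such class are strictly smaller than $\diam L_{T_{u_k}}=l(u_k)$, forcing $l(u_{k+1})<l(u_k)$. By Lemma~\ref{l2.3} every distance $d(a,b)$ between distinct leaves of $T_Y$ equals the maximum of the labels along the path joining $a$ and $b$; since all inner nodes of $T_Y$ lie on this chain and leaves have label $0$, each such distance is some $l(u_k)$. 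Conversely, each $l(u_k)$ is realized by choosing one leaf from each of the two subtrees hanging below $u_k$. Hence $\Sp{Y}=\{0,l(u_0),l(u_1),\ldots,l(u_{h-1})\}$ consists of exactly $h+1=|Y|$ distinct elements.

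Combining $|\Sp{Y}|=|Y|=|X|$ with the Gomory--Hu inequality $|\Sp{X}|\leq|X|$ then gives~\eqref{e3.8}. The only mildly delicate point is the combinatorial verification of the chain structure of inner nodes in $T_Y$, but this is immediate from the uniqueness of the inner node on each non-last level together with the observation that every node at level $k>0$ is a direct successor of a node at level $k-1$, which must therefore itself be inner.
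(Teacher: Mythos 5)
Your proposal is correct and follows essentially the same strategy as the paper: establish that $|\Sp{Y}|=|Y|$ for $Y\in\mathfrak{R}$ and then invoke the Gomory--Hu inequality $|\Sp{X}|\leq|X|$. The only difference is one of packaging: the paper obtains $|\Sp{Y}|=|Y|$ by citing the characterization from~\cite{PD(UMB)} (equality in Gomory--Hu holds iff $T_Y$ is strictly binary with pairwise distinct internal labels) together with the strict decrease of labels along parent--child edges, whereas you verify the same fact self-containedly by exhibiting the chain of inner nodes and computing $\Sp{Y}=\{0,l(u_0),\ldots,l(u_{h-1})\}$ directly from Lemma~\ref{l2.3}; both arguments are sound.
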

\begin{proof}
Indeed, it was proved by E.~R.~Gomory and T.~C.~Hu in \cite{GomoryHu(1961)} that for every finite ultrametric space $X$ we have the inequality
$$
|\Sp{X}| \leq |X|.
$$
As was shown in \cite{PD(UMB)} the equality $|\Sp{X}| = |X|$ holds if and only if $T_X$ is strictly binary and the labels of different internal nodes are different. Note that if $Z$ is a finite ultrametric space with $u$, $v \in V(T_Z)$ and $u$ is a child of $v$, then Definition~\ref{d2}, Lemma~\ref{t13} and the definition of $T_Z$ imply the strict inequality
\begin{equation}\label{e3.9}
l(u) < l(v).
\end{equation}
Since, for every $Y \in \mathfrak{R}$, the representing tree $T_Y$ has exactly one inner node on each level expect the last level, inequality~\eqref{e3.9} shows that the labels of different internal nodes are different. Hence~\eqref{e3.8} holds if $Y \in \mathfrak{R}$.
\end{proof}

\begin{rem}
If $X$ is a finite ultrametric space and $|X| = |\Sp{X}|$, then $X$ is generally not an element of $\mathfrak{R}$ (see Figure~\ref{fig2}).
\end{rem}

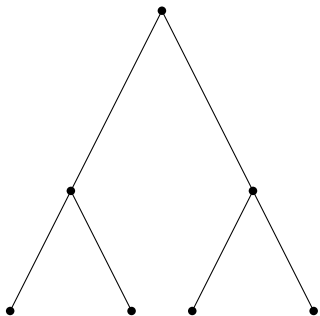
\begin{figure}[h]
\begin{tikzpicture}[scale=0.8]
\draw (0,0) -- (2.5,5) -- (5,0);
\draw (1,2) -- (2,0); \draw (4,2) -- (3,0);
\fill[black] (0,0) circle(2pt);
\fill[black] (1,2) circle(2pt);
\fill[black] (2.5,5) circle(2pt);
\fill[black] (4,2) circle(2pt);
\fill[black] (2,0) circle(2pt);
\fill[black] (3,0) circle(2pt);
\fill[black] (5,0) circle(2pt);
\end{tikzpicture}
\caption{The representing tree for an ultrametric space $X$ with $|X| = |\Sp{X}|=4$.}
\label{fig2}
\end{figure}

\section{Rigidness, stars and weak similarities}

Following~\cite{BM} denote by $K_{m,n}=K_{m,n}[X,Y]$ a complete bipartite graph such that $|X|=m$ and $|Y|=n$. In the case when $m=1$ or $n=1$ such graphs are called \emph{stars}.

The following proposition gives us the first characterization of the class $\mathfrak{R}$ by stars.

\begin{prop}\label{p4.1}
Let $(X, d)$ be a finite ultrametric space with $|X| \geq 2$. Then $X \in \mathfrak{R}$ if and only if the diametrical graphs $G_B$ are stars for all $B \in \mathbf{B}_X$ with $\diam B>0$.
\end{prop}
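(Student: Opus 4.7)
The plan is to combine Theorem~\ref{th3.3}, which characterizes the class $\mathfrak{R}$ via the representing tree, with Lemma~\ref{l2.4}, which identifies the balls in $\mathbf{B}_X$ with subsets of the form $L_{T_v}$. Under this identification the balls of positive diameter correspond bijectively to the internal nodes of $T_X$: a leaf of $T_X$ yields a singleton ball of diameter $0$, whereas an internal node $v$ gives a ball $L_{T_v}$ of cardinality at least $2$, hence of positive diameter. If $v$ is such an internal node with children $v_1,\dots,v_k$ and $B = L_{T_v}$, the subtree $T_v$ is by construction the representing tree of the ultrametric subspace $(B,d)$, so Lemma~\ref{t13} applied to $B$ shows that the diametrical graph $G_B$ is the complete multipartite graph $G_B[X_1,\dots,X_k]$ with $X_i = L_{T_{v_i}}$.

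Next I would record the elementary graph-theoretic fact that a complete multipartite graph $G[X_1,\dots,X_k]$ is a star if and only if $k=2$ and $\min\{|X_1|,|X_2|\}=1$: a star $K_{1,n}$ is precisely a complete bipartite graph with a singleton side, while a complete multipartite graph with either $k\geq 3$ parts, or with $k=2$ parts both of cardinality at least $2$, contains a cycle and is therefore not a star. Applied to every internal node of $T_X$, the star hypothesis of the proposition becomes the purely tree-theoretic condition on $T_X$ that every internal node has exactly two children, at least one of which is a leaf.

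It then remains to show that this condition is equivalent to condition (iii) of Theorem~\ref{th3.3}, namely that $T_X$ is strictly binary with exactly one inner node at each level except the last. The implication from (iii) is immediate, since (iii) forces each non-terminal inner node to have one inner child and one leaf child, and the deepest inner node to have two leaf children. For the converse I would argue by induction on the level, starting from the root as the unique inner node at level $0$: the two children of the unique inner node at level $i$ yield at most one inner node at level $i+1$, and the process terminates at some level at which the unique inner node has two leaf children. Combining this with Theorem~\ref{th3.3} gives $X\in\mathfrak{R}$.

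The main obstacle is essentially bookkeeping: one must verify that the children of an internal node $v$ of $T_X$ correspond exactly to the blocks $X_1,\dots,X_k$ of the decomposition $G_B = G_B[X_1,\dots,X_k]$ of the diametrical graph of $B=L_{T_v}$. This follows from the recursive construction of $T_X$ together with the identity $\diam L_{T_v} = l(v)$, which is a direct consequence of Lemma~\ref{l2.3}. Once this identification is in place, the star condition transparently encodes the structural property of $T_X$ appearing in Theorem~\ref{th3.3}(iii), and the equivalence follows.
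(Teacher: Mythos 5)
Your proposal is correct and follows the same route as the paper: the paper's (very terse) proof simply declares the proposition to be a reformulation of the equivalence (i)$\Leftrightarrow$(iii) of Theorem~\ref{th3.3} via Lemma~\ref{l2.4}, and your argument supplies exactly the details of that reformulation (balls of positive diameter $=$ internal nodes, $G_{L_{T_v}}$ is the complete multipartite graph on the children's leaf-sets, and a complete multipartite graph is a star iff it is bipartite with a singleton part). The only difference is that you make explicit the level-by-level induction identifying ``every inner node has two children, one a leaf'' with condition (iii), which the paper leaves to the reader.
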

\begin{proof}
From Lemma~\ref{l2.4} and the definition of strictly binary trees it follows that the proposition is a reformulation of the logic equivalence (i)$\Leftrightarrow$(iii) of Theorem~\ref{th3.3}.
\end{proof}

\begin{defn}\label{d4.1}
Let $(X,d)$ be a metric space with a spectrum $\operatorname{Sp}(X)$ and let $r\in \operatorname{Sp}(X)$ be positive. Denote by $G_{r,X}$ a graph for which $V(G_{r,X})=X$ and
$$
(\{u,v\}\in E(G_{r,X}))\Leftrightarrow (d(u,v)=r), \quad u, v \in X.
$$
For $r=\operatorname{diam} X$ it is clear that $G_{r,X}$ is the diametrical graph of $X$.
\end{defn}

Let $G=(V,E)$ be a nonempty graph, and let $V_0$ be the set (possibly empty) of all isolated vertices of the graph $G$. Denote by $G'$ the subgraph of the graph $G$, generated by the set $V\backslash V_0$.

\begin{prop}\label{p4.3}
Let $(X,d)$ be a finite ultrametric space with $|X|\geq 2$. Then $(X, d) \in \mathfrak{R}$ if and only if for every positive $r\in \Sp{X}$ the graph $G'_{r,X}$ is isometric to the star $K_{1,n-p}$, where $p$ is the level of a node of $T_X$ labeled by $r$ and $n = |X|-1$.
\end{prop}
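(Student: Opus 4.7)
The plan is to translate the graphs $G_{r,X}$ into information about the representing tree $T_X$ via Lemma~\ref{l2.3}, and then apply Theorem~\ref{th3.3} to convert between the star condition and item~(iii) of that theorem. The central observation is that, because labels decrease strictly along every parent/child edge of $T_X$ (see~\eqref{e3.9}), for any two distinct leaves $x,y$ of $T_X$ the distance $d(x,y)$ equals the label of their deepest common ancestor. Listing the internal nodes of $T_X$ that carry label $r$ as $v_1,\dots,v_s$, this yields three facts: the leaf-sets $L_{T_{v_i}}$ are pairwise disjoint (two nodes with a common label cannot be ancestor/descendant); every edge of $G_{r,X}$ is contained in some $L_{T_{v_i}}$; on $L_{T_{v_i}}$ the graph $G_{r,X}$ induces the diametrical graph $G_{L_{T_{v_i}}}$, which is complete multipartite by Lemma~\ref{t13} and hence has no isolated vertices. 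Consequently, $G'_{r,X}$ is the disjoint union $G_{L_{T_{v_1}}}\sqcup\cdots\sqcup G_{L_{T_{v_s}}}$.

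For the direction $(\Rightarrow)$, I would take $X\in\mathfrak R$, apply Theorem~\ref{th3.3}(iii) to obtain that $T_X$ is strictly binary with exactly one inner node on each non-terminal level and (as recalled in Corollary~\ref{c3.4}) all inner labels distinct. Given positive $r\in\Sp{X}$, the unique internal node $v$ with $l(v)=r$ sits at some level $p$, its two children being a singleton $\{a\}$ and a second subtree containing the remaining leaves of $L_{T_v}$; thus $G_{L_{T_v}}$ is a star centred at $a$, and a direct count along the chain of inner nodes yields $|L_{T_v}|=|X|-p$, so $G'_{r,X}\cong K_{1,n-p}$.

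For $(\Leftarrow)$, assuming the star condition for every positive $r\in\Sp{X}$, I would argue that connectedness of a star together with the decomposition above forces a single internal node $v$ labeled $r$ (so internal labels are pairwise distinct), and that $G_{L_{T_v}}$ being a star forces its complete multipartite partition to have $k=2$ parts with one part a singleton. Hence every internal node of $T_X$ has exactly two children, at least one a leaf of $T_X$. Starting from the root and, at each step, following the child which is not a leaf, I obtain a chain $v_0,v_1,\dots,v_m$ of internal nodes with $v_k$ on level $k$, all remaining vertices of $T_X$ being leaves (one on each level $1,\dots,m$, and the two leaf-children of $v_m$ on level $m+1$). This is precisely the structure in Theorem~\ref{th3.3}(iii), so $X\in\mathfrak R$. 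I expect the main hurdle to be this converse, specifically extracting the ``one inner node per level'' caterpillar structure from the bare fact that each $G'_{r,X}$ is a star: this hinges on the disjointness of leaf-sets of distinct inner nodes with a common label and on the elementary observation that a complete multipartite graph is a star only in the above bipartite, singleton-part case.
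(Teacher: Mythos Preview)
Your argument is correct and follows essentially the same route as the paper: both directions are reduced to statement~(iii) of Theorem~\ref{th3.3} via Lemma~\ref{l2.3} (equivalently, the ``label of the deepest common ancestor'' description of $d$). Your general decomposition $G'_{r,X}=\bigsqcup_i G_{L_{T_{v_i}}}$ over the internal nodes with label $r$ is a clean way to organise the converse, which the paper dispatches in a single sentence (``follows from Definition~\ref{d4.1}, statement~(iii) of Theorem~\ref{th3.3} and the definition of representing tree $T_X$''); your version makes explicit exactly the two facts that are needed there---connectedness of a star forces $s=1$, and a complete multipartite graph is a star only when it is $K_{1,m}$---and then reads off the caterpillar structure just as you describe.
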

\begin{proof}
Let $(X, d) \in \mathfrak{R}$ and let $r\in \Sp{X}$ be positive. Let $x_n$ be a leaf of the last level $n$. Consider the path $(x_n,x_{n-1},...,x_0)$ from $x_n$ to the root $x_0 = X$ of $T_X$. Using statement (iii) of Theorem~\ref{th3.3} and properties of representing trees we conclude that $x_{n-1}$, $\ldots$, $x_0$ are the only possible inner nodes of $T_X$ and all labels of $x_n$, $x_{n-1}$, $\ldots$, $x_0$ are different. Hence there is a unique node, say $x_p$, $0\leq p \leq n-1$ labeled by $r$. Let $x'$ be a direct successor of $x_p$ which is leaf and $x''$ be another direct successor of $x_p$. According to Lemma~\ref{l2.3} the equality $d(x,y)=r$ is possible only if $x=x'$ and $y\in L_{T_{x''}}$. Since $|L_{T_{x''}}|=n-p$, the graph $G'_{r,X}$ is isomorphic to $K_{1,n-p}$.

The converse follows from Definition~\ref{d4.1}, statement (iii) of Theorem~\ref{th3.3} and the definition of representing tree $T_X$.
\end{proof}

The following lemma is a reformulation of Theorem~4.1 from~\cite{DDP(P-adic)}.

\begin{lem}\label{l4.5}
Let $(X,d)$ be a finite ultrametric space with $|X|\geq 2$ and let $G_X$ be the diametrical graph of $X$. Then the inequality
\begin{equation}\label{e4.1}
|E(G_X)| \geq |X| -1
\end{equation}
holds. The equality in \eqref{e4.1} occurs if and only if $G_X$ is isomorphic to a star.
\end{lem}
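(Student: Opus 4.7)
The plan is to use Lemma~\ref{t13} to reduce the statement to a short extremal calculation on complete multipartite graphs. Writing $G_X = G_X[X_1, \ldots, X_k]$ with $k \geq 2$, and setting $n_i = |X_i|$, $n = |X|$, I would count edges directly from the complete multipartite structure:
$$
|E(G_X)| = \sum_{1 \leq i < j \leq k} n_i n_j = \frac{1}{2}\Bigl(n^2 - \sum_{i=1}^k n_i^2\Bigr).
$$
Thus \eqref{e4.1} is equivalent to the purely combinatorial bound
$$
\sum_{i=1}^k n_i^2 \leq (n-1)^2 + 1
$$
subject to the constraints $\sum_{i=1}^k n_i = n$, each $n_i \geq 1$, and $k \geq 2$.

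The second step is to maximize $\sum_i n_i^2$ under these constraints. I would first fix $k$ and observe that, among compositions of $n$ into $k$ positive parts, $\sum n_i^2$ is maximized by the most unbalanced choice $(n-k+1, 1, \ldots, 1)$, producing the value $\varphi(k) := (n-k+1)^2 + (k-1)$; this follows from the standard convexity/transfer argument (moving a unit from a smaller part to a larger part strictly increases $\sum n_i^2$). Then I would check that $\varphi(k) - \varphi(k+1) = 2(n-k) - 1 > 0$ for $k \in \{2, \ldots, n-1\}$, so $\varphi$ is strictly decreasing on $\{2, \ldots, n\}$, and hence $\max \varphi = \varphi(2) = (n-1)^2 + 1$, attained uniquely at $k = 2$ with part sizes $(n-1, 1)$. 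Substituting back into the edge count gives $|E(G_X)| \geq n - 1$, with equality precisely when $G_X \cong K_{1, n-1}$, i.e., when $G_X$ is a star. The converse implication in the equality statement is immediate from $|E(K_{1, n-1})| = n-1$.

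I do not anticipate a genuine obstacle here; once Lemma~\ref{t13} supplies the multipartite structure, the argument is elementary. The only mildly delicate point is pinning down the \emph{uniqueness} of the extremal configuration, which requires both ruling out $k \geq 3$ (handled by the strict monotonicity of $\varphi$) and ruling out non-extreme splits for $k = 2$ (handled by the same unit-transfer observation). Both steps are routine, so this is really a one-line combinatorial consequence of Lemma~\ref{t13}.
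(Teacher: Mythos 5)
Your argument is correct, but note that the paper does not actually prove Lemma~\ref{l4.5}: it is presented as a reformulation of Theorem~4.1 of~\cite{DDP(P-adic)}, and the proof is delegated entirely to that reference. Your proposal therefore supplies a self-contained proof that the paper omits. The route --- Lemma~\ref{t13} to get $G_X=G_X[X_1,\ldots,X_k]$ with $k\geq 2$, then the identity $|E(G_X)|=\frac{1}{2}\bigl(n^2-\sum_{i}n_i^2\bigr)$ and an unbalancing (convexity) argument showing $\sum_i n_i^2\leq (n-1)^2+1$ with the maximum attained only at $k=2$ with parts $(n-1,1)$ --- is sound, and it correctly identifies the equality case as $G_X\cong K_{1,n-1}$, i.e.\ a spanning star. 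One arithmetic slip: with $\varphi(k)=(n-k+1)^2+(k-1)$ one computes $\varphi(k)-\varphi(k+1)=\bigl(2(n-k)+1\bigr)-1=2(n-k)$, not $2(n-k)-1$; this is harmless, since both expressions are positive for $k\in\{2,\ldots,n-1\}$, so the strict decrease of $\varphi$ and hence the uniqueness of the extremal configuration (ruling out $k\geq 3$, while the unit-transfer step rules out balanced splits at $k=2$) still follow.
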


\begin{lem}\label{l4.6}
Let $(X,d)$ be a finite ultrametric space with $|X|\geq 2$. If $(X, d) \in \mathfrak{R}$ then for every $Y\subseteq X$, $|Y|\geq 2$, we have $(Y, d) \in \mathfrak{R}$.
\end{lem}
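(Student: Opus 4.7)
The plan is to use the characterization~(iii) in Theorem~\ref{th3.3} to describe $T_X$ explicitly and then verify that the same structural property is inherited by $T_Y$. By~(iii), the inner nodes of $T_X$ form a single chain $v_0, v_1, \ldots, v_{n-1}$ with $v_i$ sitting at level $i$; for $0 \leq i \leq n-2$ the node $v_i$ has one leaf child, while $v_{n-1}$ has two leaf children, so that $|X| = n+1$. Identifying each leaf with its point of $X$, I would denote by $\{x_i\}$ the leaf sibling of $v_i$ for $i = 1, \ldots, n-1$, and by $\{x_n\}, \{x_{n+1}\}$ the two leaf children of $v_{n-1}$. Let $a_{i+1}$ be the label of $v_i$; by~\eqref{e3.9} we have $a_1 > a_2 > \cdots > a_n > 0$.

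Applying Lemma~\ref{l2.3} to the path in $T_X$ joining $\{x_i\}$ and $\{x_j\}$, whose inner vertices form a subchain of $v_0, \ldots, v_{n-1}$ with largest label $a_{\min(i,j)}$, one obtains
$$
d(x_i, x_j) = a_{\min(i,j)}, \qquad i \neq j.
$$
Given $Y \subseteq X$ with $|Y| = k \geq 2$, list its points as $\{x_{i_1}, \ldots, x_{i_k}\}$ with $i_1 < \cdots < i_k$ and set $c_j := a_{i_j}$ for $j = 1, \ldots, k-1$. Since $i_{k-1} \leq n$ and the sequence $a_1 > a_2 > \cdots > a_n > 0$ is strictly decreasing and positive, so is $c_1 > c_2 > \cdots > c_{k-1} > 0$, and for all $1 \leq a < b \leq k$ one has $d(x_{i_a}, x_{i_b}) = a_{i_a} = c_{\min(a,b)}$.

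To conclude, I would build $T_Y$ directly from its diametrical graphs. From the formula just established, $\diam Y = c_1$ and $\{x_{i_a}, x_{i_b}\}$ is an edge of $G_Y$ iff $\min(a,b) = 1$; hence $G_Y$ is the star with center $x_{i_1}$ and leaves $x_{i_2}, \ldots, x_{i_k}$. The first level of $T_Y$ therefore consists of the leaf $\{x_{i_1}\}$ and the inner node $\{x_{i_2}, \ldots, x_{i_k}\}$, and iterating the same argument on this remaining ball peels off $x_{i_2}, x_{i_3}, \ldots$ in turn, until at the bottom a two-point ball splits into the leaves $\{x_{i_{k-1}}\}, \{x_{i_k}\}$. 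The resulting $T_Y$ is strictly binary with exactly one inner node on each level except the last, so Theorem~\ref{th3.3}(iii) yields $(Y, d) \in \mathfrak{R}$. The only real bookkeeping obstacle is checking that the labels $c_j$ stay positive and strictly decreasing, which is exactly what the bound $i_{k-1} \leq n$ secures.
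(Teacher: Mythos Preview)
Your argument is correct. Both your proof and the paper's rest on the structural description of $T_X$ from Theorem~\ref{th3.3}(iii), but the execution differs in two respects. First, the paper reduces to the case $Y = X\setminus\{x_i\}$ and argues inductively, whereas you handle an arbitrary subset $Y$ in one stroke by passing through the explicit metric formula $d(x_i,x_j)=a_{\min(i,j)}$. Second, the paper works through the ball poset: it observes via Lemma~\ref{l2.4} that $X\in\mathfrak{R}$ is equivalent to $\mathbf{B}_X$ being a chain $B_1\subset B_2\subset\cdots\subset B_n$, and then checks that deleting a point leaves $\mathbf{B}_Y$ a chain as well; you instead invoke Lemma~\ref{l2.3} to compute distances and rebuild $T_Y$ directly from its diametrical graphs. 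Your route is slightly more hands-on but has the advantage of avoiding the induction and making the hereditary nature of the ``$\min$-type'' metric completely transparent.
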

\begin{proof}
Let $n = |X|$. It is sufficient to prove that $(Y, d) \in \mathfrak{R}$ for the case
$$
Y=X\setminus\{x_i\}, \quad x_i\in X.
$$
Taking into consideration Lemma~\ref{l2.4}, the space $X=\{x_1,...,x_n\}$ for which $T_X$ fulfils statement~(iii) of Theorem~\ref{th3.3} can be the uniquely presented by sequence of nested balls $B_1 \subset B_2 ... \subset B_{n-1} \subset B_{n}$, where $B_1=\{x_1\}$, $B_i=B_{i-1}\cup\{x_i\}$, $i=2,...,n$. Let us consider the set $\mathbf{B}_Y$. It is clear that the following relations hold
\begin{equation}\label{e4.4}
\overline{B}_1 \subset \overline{B}_2 ... \subseteq \overline{B}_{n-2} \subset \overline{B}_{n-1}
\end{equation}
where $\overline{B}_1 = B_1$, ... , $\overline{B}_{i-1} = B_{i-1}$, $\overline{B}_i = B_{i+1}\setminus \{x_i\}$,...,$\overline{B}_{n-1} = B_n\setminus \{x_i\}$.

Relations~(\ref{e4.4}) and Lemma~\ref{l2.4} imply that $T_Y$ fulfils statement~(iii) of Theorem~\ref{th3.3}.
\end{proof}

The next proposition gives us a new characteristic extremal property of spaces $X \in \mathfrak{R}$.

\begin{prop}\label{p4.6}
Let $(X,d)$ be a finite ultrametric space with $|X|\geq 2$. Then the following statements are equivalent.
\begin{itemize}
\item [(i)] $X \in \mathfrak{R}$;
\item [(ii)] The inequality
\begin{equation}\label{e4.2}
|E(G_Y)| \leq |E(G_Z)|
\end{equation}
holds for all $Y \subseteq X$ and all ultrametric spaces $Z$ which satisfy the condition $|Y|=|Z| \geq 2$.
\end{itemize}
\end{prop}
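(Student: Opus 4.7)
The plan is to deduce both implications from three tools already established: Lemma~\ref{l4.5} (which gives $|E(G_Y)| \geq |Y|-1$ with equality characterizing stars), Proposition~\ref{p4.1} (membership in $\mathfrak{R}$ is equivalent to every diametrical ball graph being a star), and Lemma~\ref{l4.6} (hereditariness of $\mathfrak{R}$ on subsets of cardinality at least $2$). The key structural observation is that the lower bound in Lemma~\ref{l4.5} is attained exactly at the spaces whose diametrical graph is a star, which is the signature of $\mathfrak{R}$ when applied to every ball.

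For the direction (i)$\Rightarrow$(ii), I would fix $Y \subseteq X$ with $|Y|\geq 2$ and an arbitrary ultrametric space $Z$ with $|Z|=|Y|$. Lemma~\ref{l4.6} yields $(Y,d) \in \mathfrak{R}$, and applying Proposition~\ref{p4.1} to the ball $Y \in \mathbf{B}_Y$ (which has positive diameter since $Y$ contains two distinct points of an ultrametric space) shows that $G_Y$ is a star, so $|E(G_Y)| = |Y|-1$. Lemma~\ref{l4.5} applied to $Z$ gives $|E(G_Z)| \geq |Z|-1 = |Y|-1 = |E(G_Y)|$, which is exactly \eqref{e4.2}.

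For (ii)$\Rightarrow$(i), I would again use Proposition~\ref{p4.1}: it suffices to check that $G_B$ is a star for every ball $B \in \mathbf{B}_X$ with $\diam B > 0$. Fix such a $B$, so $|B|\geq 2$. I would then exhibit an auxiliary ultrametric space $Z$ of cardinality $|B|$ whose diametrical graph is a star; a concrete choice is $Z=\{z_0, z_1, \dots, z_{|B|-1}\}$ with $d(z_0, z_i)=1$ for $i\geq 1$ and $d(z_i, z_j)=\tfrac12$ for $1\leq i<j$, for which the strong triangle inequality is immediate. Then $|E(G_Z)|=|Z|-1=|B|-1$. Setting $Y=B$ in the hypothesis (ii) gives $|E(G_B)|\leq |B|-1$, while Lemma~\ref{l4.5} supplies the reverse inequality; the equality case of Lemma~\ref{l4.5} then forces $G_B$ to be a star, and Proposition~\ref{p4.1} concludes that $X\in\mathfrak{R}$. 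The step requiring the most care is the exhibition of the witness $Z$, but this is routine; apart from that, the argument is essentially a repackaging of Lemma~\ref{l4.5} with the stars-in-balls characterization of $\mathfrak{R}$, and I foresee no genuine obstacle.
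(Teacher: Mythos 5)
Your proposal is correct and follows essentially the same route as the paper: Lemma~\ref{l4.6} plus the star characterization for (i)$\Rightarrow$(ii), and the equality case of Lemma~\ref{l4.5} applied to each ball via Proposition~\ref{p4.1} for the converse. The only difference is that you explicitly construct the witness ultrametric space $Z$ attaining $|E(G_Z)|=|Z|-1$, a detail the paper leaves implicit; your witness is valid.
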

\begin{proof}
Let $X \in \mathfrak{R}$. Then by Lemma~\ref{l4.6} we have $Y \in \mathfrak{R}$ for every $Y \subseteq X$ with $|Y| \geq 2$. Hence $G_Y$ is a star, so that
\begin{equation}\label{e4.3}
|E(G_Y)| = |Y| - 1.
\end{equation}
Lemma~\ref{l4.5} implies
\begin{equation}\label{e4.3*}
|E(G_Z)| \geq |Z| - 1
\end{equation}
if $Z$  is a finite ultrametric space with $|Z| \geq 2$. Now~\eqref{e4.2} follows from \eqref{e4.3}, \eqref{e4.3*} and the equality $|Y|=|Z|$.

Let (ii) hold and let $B \in \mathbf{B}_X$. Condition (ii) and Lemma~\ref{l4.5} imply that the diametrical graph $G_B$ is a star. Hence $X \in \mathfrak{R}$ by Proposition~\ref{p4.1}.
\end{proof}

A graph $G = (V, E)$ together with a function $\omega: E \to \mathbb R^+$ is called a \emph{weighted graph}\/ with the \emph{weight} $\omega$. The weighted graph will be denoted as $(G, \omega)$. In the following we identify a finite ultrametric space $(X, d)$ with a complete weighted graph $(G, \omega_d)$ such that $V(G)=X$ and
$$
\omega_d(\{x,y\}) = d(x,y)
$$
for all distinct $x$, $y \in X$.

\begin{lem}\label{l4.7}
Let $(X, d)$ be an ultrametric space. Then for any cycle $C$ in $(G, \omega_d)$ there exist at least two different edges $e_1$, $e_2 \in E(C)$ such that
$$
\omega_d(e_1) = \omega_d(e_2) = \max_{e \in E(C)} \omega_d (e).
$$
\end{lem}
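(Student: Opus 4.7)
The plan is to proceed by contradiction. Suppose that the maximum weight $M := \max_{e \in E(C)} \omega_d(e)$ is attained at exactly one edge of the cycle $C$.

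First I would fix an enumeration of the vertices of $C$ as $v_1, v_2, \ldots, v_n, v_{n+1} = v_1$ so that the edges of $C$ are precisely $e_i = \{v_i, v_{i+1}\}$ for $i = 1, \ldots, n$. By relabeling cyclically I may assume without loss of generality that the unique edge realizing the maximum is $e_n = \{v_n, v_1\}$, so that $d(v_n, v_1) = M$ while $d(v_i, v_{i+1}) < M$ for every $i = 1, \ldots, n-1$.

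The key step is to apply the strong triangle inequality along the complementary path $v_1, v_2, \ldots, v_n$. A straightforward induction on the length of the path, using $d(x, y) \leq \max\{d(x,z), d(z,y)\}$ at each stage, yields
$$
d(v_1, v_n) \leq \max_{1 \leq i \leq n-1} d(v_i, v_{i+1}).
$$
Since every term on the right-hand side is strictly less than $M$, we obtain $d(v_1, v_n) < M$, contradicting $d(v_n, v_1) = M$. Hence the maximum must be realized by at least two distinct edges of $C$.

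I do not expect any real obstacle here; the only thing to be careful about is the inductive step that extends the strong triangle inequality from triples to arbitrary finite paths, which is a standard one-line induction. Everything else is bookkeeping with the cyclic indexing.
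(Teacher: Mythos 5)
Your proof is correct and rests on the same idea the paper invokes: an inductive extension of the strong triangle inequality (the paper only sketches this, handling $|E(C)|=3$ directly and citing an external reference for the induction on $|E(C)|$). Your packaging — proving the iterated ultrametric inequality $d(v_1,v_n)\leq\max_i d(v_i,v_{i+1})$ along the path complementary to the unique maximal edge and deriving a contradiction — is a clean, self-contained version of the same argument.
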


If $|E(C)|=3$, then Lemma~\ref{l4.7} is a reformulation of the strong triangle inequality. For $|E(C)| \geq 4$, it can be proved by induction on $|E(C)|$. (For details see~\cite[Lemma~1]{DP(MatSb)}.)

For a graph $G = (V, E)$ a \emph{Hamiltonian path} is a path in $G$ that visits every vertex of $G$ exactly once. It is clear that a path $P \subseteq G$ is Hamiltonian if and only if $P$ is a spanning tree of $G$. The following theorem gives us some characterizations of ultrametric spaces $(X,d) \in \mathfrak{R}$ via Hamiltonian paths and spanning stars of $(G, \omega_d)$.

\begin{thm}\label{p4.8}
Let $(X, d)$ be a finite ultrametric space with $|X| \geq 2$. Then the following statements are equivalent.
\begin{itemize}
\item [(i)] $(X, d) \in \mathfrak{R}$.
\item [(ii)] The graph $(G, \omega_d)$ has a Hamiltonian path $P=(x_1, \ldots, x_n)$ such that
\begin{equation}\label{p4.8e1}
\omega_d(\{x_k, x_{k+1}\}) > \omega_d(\{x_{k+1}, x_{k+2}\})
\end{equation}
for $k=1$, $\ldots$, $n-2$, where $n = |X|$.
\item[(iii)] The graph $(G, \omega_d)$ has a spanning star $S$ with
$$
E(S) = \{\{y_0, y_1\}, \ldots, \{y_0, y_{n-1}\}\}
$$
such that
\begin{equation}\label{p4.8e2}
\omega_d(\{y_0, y_i\}) \neq \omega_d(\{y_0, y_j\})
\end{equation}
for distinct $i$, $j \in \{1, \ldots, n-1\}$, where $n = |X|$.
\end{itemize}
\end{thm}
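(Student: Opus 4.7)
The plan is to establish the cycle $(i) \Rightarrow (ii) \Rightarrow (iii) \Rightarrow (ii) \Rightarrow (i)$. The implications between $(i)$ and $(ii)$ exploit the representing-tree characterization of Theorem~\ref{th3.3}(iii), while the equivalence of $(ii)$ and $(iii)$ rests on the \emph{isosceles triangle principle} for ultrametrics: if $d(x,y) > d(y,z)$ then $d(x,z) = d(x,y)$, which is an immediate consequence of the strong triangle inequality.

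For $(i) \Rightarrow (ii)$: assume $(X,d) \in \mathfrak{R}$. By Theorem~\ref{th3.3}(iii), $T_X$ has a unique inner node $v_k$ at each level $k = 0, 1, \ldots, n-2$. For $k = 1, \ldots, n-2$ let $\{x_k\}$ denote the leaf child of $v_{k-1}$, and let $\{x_{n-1}\}, \{x_n\}$ be the two leaf children of $v_{n-2}$. Inequality~\eqref{e3.9} gives $l(v_0) > l(v_1) > \cdots > l(v_{n-2})$, while Lemma~\ref{l2.3} applied to the tree paths $\{x_k\} \to v_{k-1} \to v_k \to \{x_{k+1}\}$ (and $\{x_{n-1}\} \to v_{n-2} \to \{x_n\}$) yields $\omega_d(\{x_k, x_{k+1}\}) = l(v_{k-1})$; this produces the required strictly decreasing Hamiltonian path $(x_1, \ldots, x_n)$.

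For $(ii) \Rightarrow (iii)$ and $(iii) \Rightarrow (ii)$: in the forward direction, write $r_k = \omega_d(\{x_k, x_{k+1}\})$. By induction on $j - i$, applying the isosceles principle to the triangle $\{x_i, x_{i+1}, x_j\}$ (noting $d(x_{i+1}, x_j) \le r_{i+1} < r_i$), we obtain $d(x_i, x_j) = r_{\min(i,j)}$ for all $i \ne j$. Then $y_0 := x_n$ together with $y_i := x_i$ for $i = 1, \ldots, n-1$ gives a spanning star with pairwise distinct weights $r_1, \ldots, r_{n-1}$. In the reverse direction, relabel so that $s_i := \omega_d(\{y_0, y_i\})$ satisfy $s_1 > s_2 > \cdots > s_{n-1}$; the isosceles principle on each triangle $\{y_0, y_i, y_{i+1}\}$ forces $d(y_i, y_{i+1}) = s_i$, so the Hamiltonian path $(y_1, y_2, \ldots, y_{n-1}, y_0)$ has strictly decreasing edge weights $s_1, s_2, \ldots, s_{n-1}$.

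The main obstacle is $(ii) \Rightarrow (i)$. Using the distance formula $d(x_i, x_j) = r_{\min(i,j)}$ derived above, the balls of $X$ can be enumerated explicitly: apart from singletons they are $X$ itself and the nested chain $\{x_m, x_{m+1}, \ldots, x_n\}$ for $m = 2, \ldots, n-1$. In each such ball the diameter is $r_m$ (respectively $r_1$), attained exactly on the pairs $\{x_m, x_j\}$ with $j > m$ (respectively $\{x_1, x_j\}$), so the diametrical graph is a star centered at $x_m$ (respectively $x_1$). Proposition~\ref{p4.1} then gives $(X, d) \in \mathfrak{R}$, closing the cycle.
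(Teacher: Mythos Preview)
Your proof is correct. The overall strategy coincides with the paper's, but the implementations differ in two places worth noting.

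For $(ii)\Rightarrow(iii)$ the paper invokes Lemma~\ref{l4.7} on the cycles $C_i$ and $C_{i+1}$ formed by closing an initial segment of the Hamiltonian path back to $y_0=x_n$, obtaining $\omega_d(\{y_0,y_i\})<\omega_d(\{y_0,y_{i+1}\})$ one step at a time. You instead prove the stronger closed formula $d(x_i,x_j)=r_{\min(i,j)}$ by a short induction using the isosceles principle directly. Your route is slightly more elementary (it does not need the cycle lemma) and, as a bonus, gives you the explicit ball structure for free.

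For closing the cycle the paper goes $(iii)\Rightarrow(i)$ directly: from the spanning star with distinct weights it reads off, via the strong triangle inequality, the diametrical decomposition at each scale and then cites Theorem~\ref{th3.3}(iii). You instead detour through $(iii)\Rightarrow(ii)\Rightarrow(i)$, and in the last step you enumerate the balls $\{x_m,\dots,x_n\}$ from the distance formula and invoke Proposition~\ref{p4.1} rather than Theorem~\ref{th3.3}. Both arguments arrive at the same nested-ball picture; the paper's is a touch shorter, while yours makes the ball poset completely explicit.
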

\begin{proof}
\textbf{(i)}$\Rightarrow$\textbf{(ii)}. Let $(X, d) \in \mathfrak{R}$ and let $n = |X|$. By statement~(iii) of Theorem~\ref{th3.3} the representing tree $T_X$ has exactly one inner node at each level expect the last level and, moreover, the last level contains exactly two leaves. Hence the number of the levels of $T_X$ is $n$ and we can enumerate the points of $X$ in the sequence $(x_1, \ldots, x_n)$ such that, for $k=1$, $\ldots$, $n-2$, $\{x_k\}$ is the leaf on the $k$-th level and $\{x_{n-1}\}$, $\{x_{n}\}$ are the leaves of the last level. From the definition of the diametrical graphs it follows that
$$
\omega_d(\{x_1, x_2\}) = d(x_1, x_2) = l(X) = \diam X.
$$
(Recall that $X$ is the root of $T_X$.)

Similarly, for every $k \in \{2, \ldots, n-1\}$, we have
$$
\omega_d(\{x_k, x_{k+1}\}) = d(x_k, x_{k+1}) = l(v_k),
$$
where $v_k$ is the unique inner node at the $k$-th level. Note now that if $v$, $u$ are the nodes of $T_X$ and $v$ is a child of $u$, then the inequality
$$
l(v) < l(u)
$$
holds. It follows directly from statement~(iii) of Theorem~\ref{th3.3} that $v_{k+1}$ is a child of $v_k$ for $k=1$, $\ldots$, $n-2$ and that $v_1$ is a child of $X$. Inequality~\eqref{p4.8e1} follows.

\textbf{(ii)}$\Rightarrow$\textbf{(iii)}. Let $P=(x_1, \ldots, x_n)$ be a Hamiltonian path in $(G, \omega_d)$ such that~\eqref{p4.8e1} holds for $k=1$, $\ldots$, $n-2$. Let us define $y_i = x_{n-i}$ for $i=0$, $\ldots$, $n-1$. Using Lemma~\ref{l4.7} with the cycles $C=C_i$ and $C=C_{i+1}$ such that
$$
E(C_i) = \{\{y_0, y_1\}, \{y_1, y_2\}, \ldots, \{y_{i-1}, y_i\}, \{y_i, y_0\}\}
$$
and
$$
E(C_{i+1}) = \{\{y_0, y_1\}, \{y_1, y_2\}, \ldots, \{y_{i-1}, y_i\}, \{y_{i}, y_{i+1}\}, \{y_{i+1}, y_0\}\}
$$
we obtain
$$
\omega_d(\{y_0, y_i\}) < \omega_d(\{y_0, y_{i+1}\})
$$
for $i=0$, $\ldots$, $n-1$. Statement~(iii) follows.

\textbf{(iii)}$\Rightarrow$\textbf{(i)}. Let (iii) hold. Then, without loss of generality, we can set
$$
X = \{x_1, x_2, \ldots, x_n\}
$$
and
$$
d(x_n, x_1) > d(x_n, x_2)  > \ldots > d(x_n, x_{n-1}).
$$

\begin{figure}[h]
\begin{tikzpicture}[scale=1.2]
\draw (0,0) node [anchor=north] {$x_8$} -- (6,0) node [anchor=north] {$x_1$};
\fill[black] (0,0) circle(2pt); \fill[black] (6,0) circle(2pt);
\draw (0,0) -- (-1,0) node [below] {$x_7$};
\fill[black] (-1,0) circle(2pt);
\draw (0,0) -- (-0.8,1) node [left] {$x_6$};
\fill[black] (-0.8,1) circle(2pt);
\draw (0,0) -- (0,2) node [above] {$x_5$};
\fill[black] (0,2) circle(2pt);
\draw (0,0) -- (1.1,2.5) node [above] {$x_4$};
\fill[black] (1.1,2.5) circle(2pt);
\draw (0,0) -- (3,2) node [above right] {$x_3$};
\fill[black] (3,2) circle(2pt);
\draw (0,0) -- (5,1) node [right] {$x_2$}; \fill[black] (5,1) circle(2pt);
\end{tikzpicture}
\caption{A spanning star in $(G, \omega_d)$ for $(X, d) \in \mathfrak{R}$ with $|X|=8$.}
\label{fig4}
\end{figure}
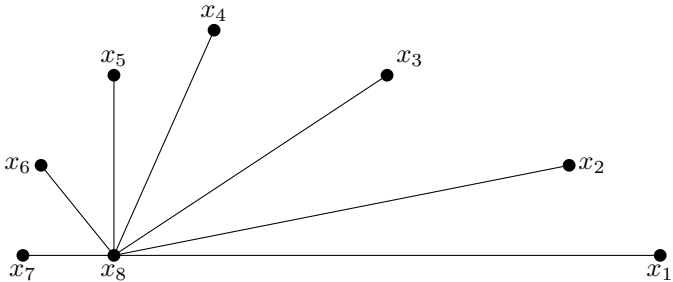

Using the strong triangle inequality we obtain that
$$
d(x_1, x_i) = \diam X \quad\text{and}\quad d(x_i,x_j)<\diam X
$$
for all distinct $i$, $j \in \{2, \ldots, n\}$. Similarly we see that
$$
d(x_k, x_i) = \diam\{x_k, x_{k+1}, \ldots, x_n\}\quad \text{and}\quad d(x_i, x_j) < d(x_k, x_i)
$$
if $i$, $j \in \{k+1, \ldots, n\}$.

Hence statement~(iii) of Theorem~\ref{th3.3} holds. The implication (iii)$\Rightarrow$(i) follows.
\end{proof}

Recall that a cycle $C$ in a graph $G$ is Hamiltonian if $V(C)=V(G)$.

\begin{cor}\label{c4.9}
Let $(X, d)$ be a finite ultrametric space with $|X| \geq 3$. Then $(X,d) \in \mathfrak{R}$ if and only if the weighted graph $(G, \omega_d)$ contains a Hamiltonian cycle $(x_1, \ldots, x_n)$ such that
$$
\omega_d(\{x_1, x_2\}) = \omega_d(\{x_n, x_1\}) = \max_{e \in E(C)} \omega_d(e)
$$
and
$$
\omega_d(\{x_k, x_{k+1}\}) > \omega_d(\{x_{k+1}, x_{k+2}\})
$$
for $k = 1$, $\ldots$, $n-2$.
\end{cor}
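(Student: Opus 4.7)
The plan is to reduce the corollary to Theorem~\ref{p4.8}, which already characterizes $\mathfrak{R}$ by Hamiltonian paths with strictly decreasing edge weights. The converse is almost immediate: given the Hamiltonian cycle $(x_1,\ldots,x_n)$ from~(iii), simply delete the edge $\{x_n,x_1\}$ to obtain a Hamiltonian path $(x_1,\ldots,x_n)$ in $(G,\omega_d)$. The hypothesis of Corollary~\ref{c4.9} provides precisely inequality~\eqref{p4.8e1} for $k=1,\ldots,n-2$, so the assumptions of Theorem~\ref{p4.8}(ii) are satisfied, and hence $(X,d)\in\mathfrak{R}$.

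For the forward direction, I would start with a Hamiltonian path $P=(x_1,\ldots,x_n)$ obtained from Theorem~\ref{p4.8}(ii) and show that appending the chord $\{x_n,x_1\}$ yields a Hamiltonian cycle satisfying the extra constraint in~(iii). Specifically, I need to establish that $\omega_d(\{x_n,x_1\}) = \omega_d(\{x_1,x_2\}) = \diam X$ and that this common value dominates every other edge weight on the cycle. To do this, I would invoke Proposition~\ref{p4.3} with $r=\diam X$: since $(X,d)\in\mathfrak{R}$, the graph $G'_{\diam X,X}$ is isomorphic to the star $K_{1,n-1}$, so there is a unique apex vertex $x^{*}\in X$ with $d(x^{*},y)=\diam X$ for every $y\in X\setminus\{x^{*}\}$. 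The apex $x^{*}$ corresponds to the unique leaf at level~$1$ of $T_X$ (which exists by Theorem~\ref{th3.3}(iii) and the fact that $T_X$ is strictly binary), and the construction in the proof (i)$\Rightarrow$(ii) of Theorem~\ref{p4.8} forces $x_1=x^{*}$. Consequently $\omega_d(\{x_1,y\})=\diam X$ for every $y\neq x_1$, so in particular both $\omega_d(\{x_1,x_2\})$ and $\omega_d(\{x_n,x_1\})$ equal $\diam X$. Because every other edge weight is bounded above by $\diam X$, this value is the maximum over the cycle, and combining with the strict chain of inequalities~\eqref{p4.8e1} gives~(iii).

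The only genuine obstacle is the verification in the forward direction that the closing edge $\{x_n,x_1\}$ carries weight exactly $\diam X$ rather than something smaller; once one identifies $x_1$ as the apex of the star $G'_{\diam X,X}$, the remainder is bookkeeping with the strong triangle inequality and the already-established decreasing-weight Hamiltonian path.
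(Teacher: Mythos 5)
Your proposal is correct, and its overall skeleton is the same as the paper's: both directions reduce to statement~(ii) of Theorem~\ref{p4.8} by closing the Hamiltonian path into a cycle (forward) or deleting the edge $\{x_n,x_1\}$ (converse). The one place where you diverge is the only nontrivial step, namely verifying $\omega_d(\{x_n,x_1\})=\omega_d(\{x_1,x_2\})=\max_{e\in E(C)}\omega_d(e)$. The paper gets this from Lemma~\ref{l4.7}: in the cycle obtained by adjoining $\{x_n,x_1\}$ to the path, the maximal weight must be attained on at least two edges, and since the path weights are strictly decreasing the only candidates for a tie are $\{x_1,x_2\}$ and the closing edge. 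You instead identify $x_1$ as the apex of the spanning diametrical star (via Proposition~\ref{p4.3} with $r=\diam X$, equivalently the leaf at level~$1$ of $T_X$ given Theorem~\ref{th3.3}(iii)), so that every edge incident to $x_1$ --- in particular both $\{x_1,x_2\}$ and $\{x_n,x_1\}$ --- has weight $\diam X$, which trivially dominates the cycle. Both arguments are sound; the paper's is shorter and works for an arbitrary Hamiltonian path satisfying~\eqref{p4.8e1} without inspecting how it was built, whereas yours leans on the specific enumeration from the proof of (i)$\Rightarrow$(ii) of Theorem~\ref{p4.8} (which is legitimate, since you may re-run that construction) and in exchange pins down the exact value $\diam X$ of the two maximal edges, a slightly stronger piece of information than the corollary asks for.
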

The proof is immediate from statement~(ii) of Theorem~\ref{p4.8} and Lemma~\ref{l4.7}. The next lemma is a particular case of Theorem~7 from~\cite{DP(MatSb)}.

\begin{lem}\label{l4.10}
Let $(S, \omega)$ be a weighted star with $\omega(e) > 0$ for every $e \in E(S)$. Then the following conditions are equivalent
\begin{enumerate}
\item There is a unique ultrametric space $(X,d)$ such that $X = V(S)$ and $d(x,y)=\omega(\{x,y\})$ for every $\{x,y\} \in E(S)$.
\item The weight $\omega\colon E(S) \to \mathbb R^+$ is an injective function.
\end{enumerate}
\end{lem}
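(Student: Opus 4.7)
The plan is to parametrize the star as $S = K_{1,n-1}[\{y_0\},\{y_1,\dots,y_{n-1}\}]$, so the edges are exactly $\{y_0,y_i\}$ for $i=1,\dots,n-1$, and the ``free'' pairs on which an ultrametric $d$ extending $\omega$ must be defined are the $\{y_i,y_j\}$ with $1\le i<j\le n-1$. The key tool throughout is the familiar \emph{isosceles property} in ultrametric spaces: in any triangle with three distinct sides at least one side must violate the strong triangle inequality, so in any triangle the maximum side length is attained at least twice. In particular, for the triple $\{y_0,y_i,y_j\}$, if $\omega(\{y_0,y_i\})\ne \omega(\{y_0,y_j\})$ then $d(y_i,y_j)$ is forced to equal $\max\{\omega(\{y_0,y_i\}),\omega(\{y_0,y_j\})\}$.

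For the implication (2)$\Rightarrow$(1), assume $\omega$ is injective and define
$$
d(y_i,y_j) := \max\{\omega(\{y_0,y_i\}),\omega(\{y_0,y_j\})\}
$$
for all $1\le i<j\le n-1$, together with $d(y_0,y_i)=\omega(\{y_0,y_i\})$. I would check the strong triangle inequality on two types of triples: (a) triples containing $y_0$, which is immediate from the definition; and (b) triples $\{y_i,y_j,y_k\}$ of leaves, where one orders the three edge-weights from $y_0$ and observes that two of the three computed distances coincide with the largest of those weights while the third is no larger, so again the maximum is attained twice. Uniqueness follows from the isosceles remark above: injectivity of $\omega$ forces the value of $d(y_i,y_j)$ on every pair.

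For (1)$\Rightarrow$(2) I would argue by contrapositive. If $\omega$ is not injective, pick $i\ne j$ with $\omega(\{y_0,y_i\})=\omega(\{y_0,y_j\})=c>0$. Define two candidate extensions $d_1$ and $d_2$ that agree on every pair except $\{y_i,y_j\}$, where one sets $d_1(y_i,y_j)=c$ and $d_2(y_i,y_j)=c/2$, and for the remaining leaf pairs one uses the formula $\max\{\omega(\{y_0,y_a\}),\omega(\{y_0,y_b\})\}$ as above. I then verify that both $d_1$ and $d_2$ are ultrametrics by inspecting each triangle $\{y_i,y_j,y_k\}$ with $k\notin\{0,i,j\}$ in the three cases $\omega(\{y_0,y_k\})>c$, $<c$, $=c$; in each case the altered side $d_\ell(y_i,y_j)$ is bounded above by the (repeated) maximum of the other two sides, so the maximum is still attained at least twice. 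Every other triangle is unchanged from the construction in (2)$\Rightarrow$(1) and therefore still ultrametric.

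The only step that requires care is the triangle-by-triangle verification in the second implication, because one must confirm that shrinking $d(y_i,y_j)$ below the common value $c$ does not propagate a violation into a triangle involving a third leaf. The routine case analysis above shows it does not, so the existence of two distinct ultrametric extensions $d_1\ne d_2$ contradicts (1), completing the proof.
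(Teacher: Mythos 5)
Your proof is correct. Note that the paper itself offers no proof of this lemma: it is stated as a particular case of Theorem~7 of \cite{DP(MatSb)}, i.e., the authors defer to the general theory of subdominant (pseudo)ultrametrics on weighted graphs. Your argument is a self-contained, elementary specialization of that machinery to stars. The extension $d(y_i,y_j)=\max\{\omega(\{y_0,y_i\}),\omega(\{y_0,y_j\})\}$ is precisely the maximal ultrametric extension along the unique $y_i$--$y_j$ path in $S$, and the isosceles property you invoke is the standard consequence of the strong triangle inequality (it is what Lemma~\ref{l4.7} records for triangles). All your verifications close: for a triple of leaves the three candidate distances are the pairwise maxima of the three center-weights, so the overall maximum is attained at least twice; in the contrapositive direction the perturbed side $c/2$ is strictly smaller than the two equal sides $\max\{c,\omega(\{y_0,y_k\})\}\ge c$ of every triangle it enters, so no violation propagates, and $d_1\ne d_2$ kills uniqueness. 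Two cosmetic points worth making explicit: in (2)$\Rightarrow$(1), uniqueness requires the isosceles argument for \emph{every} leaf pair, which injectivity supplies since all center-weights are pairwise distinct; and positivity of $d$ on distinct points follows from $\omega>0$. What your route buys is a short direct proof that avoids citing the general theorem; what the paper's citation buys is consistency with the broader framework of \cite{DP(MatSb)}, where the same statement is proved for arbitrary connected weighted graphs rather than stars.
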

This lemma and statement~(iii) of Theorem~\ref{p4.8} give us the following.

\begin{cor}\label{c4.11}
Let $(X, d)$ be a finite ultrametric space with $|X| \geq 2$. Then $(X,d) \in \mathfrak{R}$ if and only if $(G, \omega_d)$ contains a spanning star $S$ such that, for every ultrametric $\rho: X\times X \to \mathbb R$, we have
$$
(\forall e \in E(C) : \omega_d(e) = \omega_\rho(e)) \Rightarrow (\rho=d).
$$
\end{cor}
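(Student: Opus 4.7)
The plan is to deduce the corollary by combining the equivalence (i)$\Leftrightarrow$(iii) of Theorem~\ref{p4.8} with Lemma~\ref{l4.10}; essentially, the spanning star with pairwise distinct weights supplied by Theorem~\ref{p4.8}(iii) is exactly the one whose uniqueness is characterized by Lemma~\ref{l4.10}.

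For the direct implication, suppose $(X, d) \in \mathfrak{R}$. Applying Theorem~\ref{p4.8}(iii) I obtain a spanning star $S$ of $(G, \omega_d)$, centered at some $y_0 \in X$ with edges $\{y_0, y_1\}, \ldots, \{y_0, y_{n-1}\}$, such that the values $\omega_d(\{y_0, y_i\})$ are pairwise distinct. All these weights are positive because $y_i \neq y_0$ and $d$ is a metric, so the hypotheses of Lemma~\ref{l4.10} are met. Condition~(2) of Lemma~\ref{l4.10} holds (the weight $\omega_d|_{E(S)}$ is injective), hence condition~(1) also holds: there is a unique ultrametric on $V(S) = X$ taking the prescribed values on $E(S)$. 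Since $d$ itself is one such ultrametric, uniqueness forces every ultrametric $\rho$ on $X$ with $\rho(x,y) = d(x,y)$ for every $\{x,y\} \in E(S)$ to coincide with $d$.

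For the converse, assume $S$ is a spanning star of $(G, \omega_d)$ with the stated uniqueness property. The restriction $d|_{X \times X}$ is an ultrametric on $V(S) = X$ which by construction agrees with $\omega_d$ on $E(S)$, so at least one ultrametric on $V(S)$ taking these values exists; by the hypothesized uniqueness property, it is the only one. This is precisely condition~(1) of Lemma~\ref{l4.10} applied to the weighted star $(S, \omega_d|_{E(S)})$ (note again positivity of the weights since $|X| \geq 2$ and $S$'s edges join distinct vertices). The equivalence of Lemma~\ref{l4.10} therefore yields condition~(2), i.e.\ $\omega_d|_{E(S)}$ is injective. This is exactly statement~(iii) of Theorem~\ref{p4.8}, so $(X, d) \in \mathfrak{R}$.

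There is no serious obstacle: the only point requiring attention is to verify the positivity hypothesis of Lemma~\ref{l4.10}, which is immediate since every edge of the spanning star $S$ connects two distinct points of the metric space $X$. The corollary then follows as a purely formal combination of Theorem~\ref{p4.8}(iii) and Lemma~\ref{l4.10}.
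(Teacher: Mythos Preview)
Your argument is correct and follows exactly the route indicated in the paper, which simply notes that the corollary follows from Lemma~\ref{l4.10} together with statement~(iii) of Theorem~\ref{p4.8}. You have merely spelled out the two directions of this formal combination in detail.
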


\begin{rem}
Using Theorem~2.5 from~\cite{DPT(P-adic)} and Theorem~7 from~\cite{DP(MatSb)} we can show that the statement

``There is a path $P$ is $(G, \omega_d)$ such that
$$
(\forall e \in E(P) : \omega_d(e) = \omega_\rho(e)) \Rightarrow (\rho=d)
$$
holds for every ultrametric $\rho: X\times X \to \mathbb R$''

is equivalent to
$$
|X|=|\Sp{X}|.
$$
Hence we can not use any Hamiltonian path instead of star in Corollary~\ref{c4.11}.
\end{rem}

Recall that a function $\Phi$ from a metric space $(X, d)$ to a metric space $(Y, \rho)$ is a similarity if there is $\lambda>0$ such that
$$
\lambda(d(x,y)) = \rho(\Phi(x),\Phi(y))
$$
for all $x$, $y \in X$.

\begin{defn}\label{d4.5}
Let  $(X,d)$ and $(Y,\rho)$ be metric spaces. A bijective mapping $\Phi\colon X\to Y$ is a weak similarity if there is a strictly increasing bijection $f\colon \Sp{X}\to \Sp{Y}$ such that the equality
\begin{equation}\label{e4.5}
f(d(x,y))=\rho(\Phi(x),\Phi(y))
\end{equation}
holds for all $x$, $y\in X$.
\end{defn}

If $\Phi \colon X \to Y$ is a weak similarity we say that $X$ and $Y$ are \emph{weakly similar}. If $(X, d)$ is a finite metric space, then every weak similarity $\Phi\colon X \to X$ is an isometry. The notion of weak similarity was introduced in~\cite{DP2} for more general case of semimetric spaces in a slightly different form.

\begin{prop}\label{p4.10}
Let $(X,d) \in \mathfrak{R}$ and let $(Y,\rho)$ be a metric space. If $(X,d)$ and $(Y,\rho)$ are weakly similar, then $(Y,\rho) \in \mathfrak{R}$.
\end{prop}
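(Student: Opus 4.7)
The plan is to exploit Theorem~\ref{p4.8}, whose statement~(ii) characterizes membership in $\mathfrak{R}$ purely through the existence of a Hamiltonian path in $(G, \omega_d)$ whose consecutive edge weights are strictly decreasing. Because the bijection $f\colon \Sp{X}\to\Sp{Y}$ from Definition~\ref{d4.5} is strictly increasing, it preserves both the strong triangle inequality and strict inequalities between distances; hence both the ultrametric property and the Hamiltonian path condition should transfer from $(X, d)$ to $(Y, \rho)$, and Theorem~\ref{p4.8} will close the argument.

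First I would verify that $(Y, \rho)$ is itself ultrametric. Given arbitrary $y_1, y_2, y_3 \in Y$, set $x_i = \Phi^{-1}(y_i)$. The strong triangle inequality in $X$ gives $d(x_1, x_2) \leq \max\{d(x_1, x_3), d(x_3, x_2)\}$. Applying $f$, which commutes with binary maxima since it is strictly increasing, and using \eqref{e4.5}, yields $\rho(y_1, y_2) \leq \max\{\rho(y_1, y_3), \rho(y_3, y_2)\}$, which is the strong triangle inequality for $\rho$. Bijectivity of $\Phi$ also ensures $|Y| = |X| \geq 2$, so $(Y, \rho)$ is a finite ultrametric space eligible for Theorem~\ref{p4.8}.

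Next, applying Theorem~\ref{p4.8}(i)$\Rightarrow$(ii) to $(X, d) \in \mathfrak{R}$ produces a Hamiltonian path $(x_1, \ldots, x_n)$ in $(G, \omega_d)$, $n = |X|$, with $d(x_k, x_{k+1}) > d(x_{k+1}, x_{k+2})$ for $k = 1, \ldots, n-2$. Setting $y_k = \Phi(x_k)$ yields a Hamiltonian path in the complete graph on $Y$, and \eqref{e4.5} combined with the strict monotonicity of $f$ gives
$$
\rho(y_k, y_{k+1}) = f(d(x_k, x_{k+1})) > f(d(x_{k+1}, x_{k+2})) = \rho(y_{k+1}, y_{k+2})
$$
for the same range of $k$. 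Theorem~\ref{p4.8}(ii)$\Rightarrow$(i) then delivers $(Y, \rho) \in \mathfrak{R}$. There is no substantive obstacle here: the entire argument reduces to the fact that a strictly increasing bijection of spectra preserves max, $\leq$, and $<$, so both conditions that Theorem~\ref{p4.8} cares about are automatically transported along $\Phi$.
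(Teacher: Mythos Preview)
Your proof is correct, but it proceeds along a different route from the paper's. The paper argues via the ball structure: it observes that a weak similarity $\Phi$ induces an inclusion-preserving bijection $\mathbf{B}_X \to \mathbf{B}_Y$, hence an isomorphism of the rooted graphs $(\Gamma_X, X)$ and $(\Gamma_Y, Y)$; Theorem~\ref{t2.7} then yields both that $Y$ is ultrametric and that $\overline{T}_X \simeq \overline{T}_Y$, after which statement~(iii) of Theorem~\ref{th3.3} gives $Y \in \mathfrak{R}$. Your approach instead verifies ultrametricity of $Y$ directly from the monotonicity of $f$ and then transports the Hamiltonian-path criterion of Theorem~\ref{p4.8}(ii) through $\Phi$. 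Your argument is slightly more elementary and self-contained, avoiding the machinery of $\Gamma_X$ and representing trees; the paper's approach, on the other hand, highlights the structural fact that weak similarity induces an isomorphism of representing trees, which is of independent interest and is reused in the proof of Proposition~\ref{p4.11}.
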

\begin{proof}
If $\Phi: X \to Y$ is a weak similarity, then $\mathbf{B}_Y = \{\Phi(B): B \in \mathbf{B}_X\}$ and the mapping
$$
\mathbf{B}_X \ni B \mapsto \Phi(B) \in \mathbf{B}_Y
$$
is a bijection. It is clear also that
$$
(A \subseteq C) \Leftrightarrow (\Phi(A) \subseteq\Phi(C))
$$
holds for all $A \subseteq X$ and $C \subseteq X$. Hence the graphs $(\Gamma_X, X)$ and $(\Gamma_Y, Y)$ are isomorphic. Using Theorem~\ref{t2.7} we obtain that $Y$ is ultrametric and $\overline{T}_Y$ is isomorphic to $\overline{T}_X$. Now $Y \in \mathfrak{R}$ follows from statement~(iii) of Theorem~\ref{th3.3}.
\end{proof}

\begin{prop}\label{p4.11}
Let $X$, $Y\in \mathfrak{R}$. Then the following statements are equivalent.
\begin{itemize}
\item [(i)] The trees $\overline{T}_X$ and $\overline{T}_Y$ are isomorphic as rooted trees.
\item [(ii)] $X$ and $Y$ are weakly similar.
\item [(iii)] The equality $|X|=|Y|$ holds.
\end{itemize}
\end{prop}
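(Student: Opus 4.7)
The plan is to establish the cyclic implications (ii)$\Rightarrow$(iii)$\Rightarrow$(i)$\Rightarrow$(ii). The implication (ii)$\Rightarrow$(iii) is immediate from Definition~\ref{d4.5}, since a weak similarity is by definition a bijection between $X$ and $Y$, forcing $|X|=|Y|$.

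For (iii)$\Rightarrow$(i), I would appeal to statement~(iii) of Theorem~\ref{th3.3}, which asserts that for every $Z\in\mathfrak{R}$ the underlying rooted tree of $T_Z$ is strictly binary with exactly one inner node at each level except the last. Such a shape is uniquely determined up to rooted-tree isomorphism by its number of leaves: the root has one leaf-child and one inner-child; that inner child again has a leaf-child and an inner-child; and so on, until the deepest inner node produces two leaf-children. Since the number of leaves of $T_Z$ equals $|Z|$, the equality $|X|=|Y|$ forces $\overline{T}_X\cong\overline{T}_Y$ as rooted trees, through the obvious level-by-level identification.

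The substantive step is (i)$\Rightarrow$(ii). Given a rooted-tree isomorphism $\phi\colon V(\overline{T}_X)\to V(\overline{T}_Y)$, I would first extract a bijection $\Phi\colon X\to Y$ from the identification of leaves with one-point subsets by setting $\phi(\{x\})=\{\Phi(x)\}$. Next, because $X\in\mathfrak{R}$, each positive $r\in\Sp{X}$ is the label of a \emph{unique} inner node $v_r$ of $T_X$ (this follows from Theorem~\ref{th3.3}(iii) combined with the strict inequality \eqref{e3.9}), and similarly for $Y$, so I can define $f\colon\Sp{X}\to\Sp{Y}$ by $f(0)=0$ and $f(r):=l(\phi(v_r))$. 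Since $\phi$ restricts to a bijection between the inner nodes of the two trees, $f$ is a bijection; and since $\phi$ preserves levels while \eqref{e3.9} strictly orders the labels by the level of their carrying node, $f$ is strictly increasing. To finish, I would apply Lemma~\ref{l2.3} to the path joining $\{x\}$ and $\{y\}$ in $T_X$ and to its $\phi$-image in $T_Y$: the strict monotonicity of $f$ lets it commute with $\max$, yielding $\rho(\Phi(x),\Phi(y))=f(d(x,y))$ for all $x,y\in X$, so $\Phi$ is a weak similarity.

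The principal obstacle is verifying that the induced map $f$ is well-defined and strictly increasing; this is exactly where the full force of the $\mathfrak{R}$-hypothesis (``one inner node per non-terminal level'') is used, since otherwise several inner nodes could carry the same label, or labels could not be unambiguously ordered by the level of their carrying node, and neither the definition of $f$ nor its monotonicity would survive.
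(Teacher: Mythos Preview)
Your proof is correct, but it takes a different route from the paper. The paper establishes (i)$\Rightarrow$(iii) trivially, (ii)$\Rightarrow$(i) by mimicking the ball-set argument of Proposition~\ref{p4.10}, and then spends its effort on (iii)$\Rightarrow$(ii): it invokes the Hamiltonian-path characterization of Theorem~\ref{p4.8}(ii) to enumerate $X=(x_1,\ldots,x_n)$ and $Y=(y_1,\ldots,y_n)$ along paths with strictly decreasing edge weights, sets $\Phi(x_i)=y_i$ and $f(d(x_k,x_{k+1}))=\rho(y_k,y_{k+1})$, and verifies~\eqref{e4.5} via the cycle lemma (Lemma~\ref{l4.7}). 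You instead close the cycle through (i)$\Rightarrow$(ii), working directly with a rooted-tree isomorphism $\phi$ and reading off both $\Phi$ (from leaves) and $f$ (from inner-node labels), with Lemma~\ref{l2.3} doing the final verification.

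Your approach is arguably more self-contained: it needs only the tree description from Theorem~\ref{th3.3}(iii) and the distance formula of Lemma~\ref{l2.3}, and it makes transparent exactly where the $\mathfrak{R}$-hypothesis enters (uniqueness of the inner node carrying a given label, and the level-monotonicity of labels). The paper's approach, by contrast, cashes in the Hamiltonian-path machinery already built in Theorem~\ref{p4.8}, so it is shorter in context but depends on that earlier characterization and on Lemma~\ref{l4.7}. Both arguments are equally valid; yours would also generalize slightly more readily to any class of ultrametric spaces whose unlabeled representing trees are determined by $|X|$ and whose inner-node labels are pairwise distinct.
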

\begin{proof}
The implication~\textbf{(i)}$\Rightarrow$\textbf{(iii)}. is immediate. Analysis similar to that in the proof of Proposition~\ref{p4.10} shows that~\textbf{(ii)}$\Rightarrow$\textbf{(i)}. holds.

Let us prove \textbf{(iii)}$\Rightarrow$\textbf{(ii)}. Let $|X|=|Y|$. Write $n = |X|=|Y|$. Statement~(ii) of Theorem~\ref{p4.8} implies that there is a Hamiltonian path $(x_1, \ldots, x_n) \subseteq (G, \omega_d)$ and a Hamiltonian path $(y_1, \ldots, y_n) \subseteq (G, \omega_\rho)$ such that
\begin{equation}\label{p4.11e1}
\omega_d(\{x_k, x_{k+1}\}) > \omega_d(\{x_{k+1}, x_{k+2}\})
\end{equation}
and
\begin{equation}\label{p4.11e2}
\omega_\rho(\{x_k, x_{k+1}\}) > \omega_\rho(\{x_{k+1}, x_{k+2}\})
\end{equation}
for $k=1$, $\ldots$, $n-2$. The Gomory-Hu inequality implies that
$$
\Sp{X} = \{d(x_k, x_{k+1}): k=1, \ldots, n-1\} \cup \{0\}
$$
and
$$
\Sp{Y} = \{\rho(y_k, y_{k+1}): k=1, \ldots, n-1\} \cup \{0\}.
$$
Let us define the functions $\Phi: X \to Y$ and $f: \Sp{X} \to \Sp{Y}$ such that
$$
\Phi(x_i)= y_i \quad \text{and}\quad f(0)=0\quad \text{and}\quad f(d(x_k, x_{k+1})) = \rho(x_k, x_{k+1})
$$
for $k=1$, $\ldots$, $n-1$.

Inequality~\eqref{p4.11e1} and \eqref{p4.11e2} imply that $f$ is strictly increasing. Moreover, it is clear that $\Phi$ and $f$ are bijective. Now using Lemma~\ref{l4.7} we obtain that equality~\eqref{e4.5} holds for all $x$, $y \in X$. The implication~(iii)$\Rightarrow$(ii). follows.
\end{proof}

\begin{rem}\label{r4.16}
Let $a$, $b >0$. If $(X, d)$ and $(Y, \rho)$ are ultrametric spaces for which
$$
d(x_1, x_2)=a \quad \text{and} \quad \rho(y_1, y_2)=b
$$
for all distinct $x_1$, $x_2 \in X$ and $y_1$, $y_2 \in Y$, then $(X, d)$ and $(Y, \rho)$ are weakly similar if and only if $|X|=|Y|$. For these spaces we have also $(X, d)\notin \mathfrak{R}$ and  $(Y, \rho) \notin \mathfrak{R}$ if $|X|$, $|Y| \geq 3$.
\end{rem}

It seems to be interesting to find a ``representing tree description'' of the classes of finite ultrametric spaces $X$, $Y$ for which the conditions $|X|=|Y|$, implies that $X$ and $Y$ are weakly similar.


\end{document}